\documentclass[12pt]{amsart}
\usepackage{amsmath,amssymb,amsbsy,amsfonts,latexsym,amsopn,amstext,cite,
                                               amsxtra,euscript,amscd,bm,mathabx}
\usepackage{url}
\usepackage{amsbsy} 
\usepackage[colorlinks,linkcolor=blue,anchorcolor=blue,citecolor=blue,backref=page]{hyperref}
\usepackage{color}
\usepackage{graphics,epsfig}
\usepackage{graphicx}
\usepackage{float} 
\usepackage[english]{babel} 
\usepackage{mathtools}
\usepackage{todonotes}
\usepackage{url}
\usepackage[colorlinks,linkcolor=blue,anchorcolor=blue,citecolor=blue,backref=page]{hyperref}

\def \newr{r}

\usepackage[norefs,nocites]{refcheck}

\hypersetup{breaklinks=true}

\usepackage[norefs,nocites]{refcheck}
\usepackage[english]{babel}
\begin{document}

\newtheorem{thm}{Theorem}
\newtheorem{lem}[thm]{Lemma}
\newtheorem{claim}[thm]{Claim}
\newtheorem{cor}[thm]{Corollary}
\newtheorem{prop}[thm]{Proposition} 
\newtheorem{definition}[thm]{Definition}
\newtheorem{rem}[thm]{Remark} 
\newtheorem{question}[thm]{Question}
\newtheorem{qn}[thm]{Question}
\newtheorem{conj}[thm]{Conjecture}
\newtheorem{prob}{Problem}
\newtheorem{assumption}[thm]{Assumption}

\newcommand{\GL}{\operatorname{GL}}
\newcommand{\SL}{\operatorname{SL}}
\newcommand{\lcm}{\operatorname{lcm}}
\newcommand{\ord}{\operatorname{ord}}
\newcommand{\Op}{\operatorname{Op}}
\newcommand{\Tr}{\operatorname{Tr}}
\newcommand{\Nm}{\operatorname{Nm}}

\numberwithin{equation}{section}
\numberwithin{thm}{section}
\numberwithin{table}{section}

\def\vol {{\mathrm{vol\,}}}
\def\squareforqed{\hbox{\rlap{$\sqcap$}$\sqcup$}}
\def\qed{\ifmmode\squareforqed\else{\unskip\nobreak\hfil
\penalty50\hskip1em\null\nobreak\hfil\squareforqed
\parfillskip=0pt\finalhyphendemerits=0\endgraf}\fi}

\def \balpha{\bm{\alpha}}
\def \bbeta{\bm{\beta}}
\def \bgamma{\bm{\gamma}}
\def \blambda{\bm{\lambda}}
\def \bchi{\bm{\chi}}
\def \bphi{\bm{\varphi}}
\def \bpsi{\bm{\psi}}
\def \bomega{\bm{\omega}}
\def \btheta{\bm{\vartheta}}
\def \ochi{\overline{\chi}}
\def \h{\widehat h}

\def\ovG {\overline{\Gamma}}

\def\uu {\mathbf{u}} 
\def\uv {\mathbf{v}} 
\def\uk {\mathbf{k}} 
\def\ut {\mathbf{t}}

\def\eps{\varepsilon}

\newcommand{\bfxi}{{\boldsymbol{\xi}}}
\newcommand{\bfrho}{{\boldsymbol{\rho}}}

\def\Kab{\sfK_\psi(a,b)}
\def\Kuv{\sfK_\psi(u,v)}
\def\SaUV{\cS_\psi(\balpha;\cU,\cV)}
\def\SaAV{\cS_\psi(\balpha;\cA,\cV)}

\def\SUV{\cS_\psi(\cU,\cV)}
\def\SAB{\cS_\psi(\cA,\cB)}

\def\Kmnp{\sfK_p(m,n)}

\def\KKap{\cH_p(a)}
\def\KKaq{\cH_q(a)}
\def\KKmnp{\cH_p(m,n)}
\def\KKmnq{\cH_q(m,n)}

\def\Klmnp{\sfK_p(\ell, m,n)}
\def\Klmnq{\sfK_q(\ell, m,n)}

\def \SALMNq {\cS_q(\balpha;\cL,\cI,\cJ)}
\def \SALMNp {\cS_p(\balpha;\cL,\cI,\cJ)}

\def \SACXMQX {\fS(\balpha,\bzeta, \bxi; M,Q,X)}

\def\SAMJp{\cS_p(\balpha;\cM,\cJ)}
\def\SAMJq{\cS_q(\balpha;\cM,\cJ)}
\def\SAqMJq{\cS_q(\balpha_q;\cM,\cJ)}
\def\SAJq{\cS_q(\balpha;\cJ)}
\def\SAqJq{\cS_q(\balpha_q;\cJ)}
\def\SAIJp{\cS_p(\balpha;\cI,\cJ)}
\def\SAIJq{\cS_q(\balpha;\cI,\cJ)}

\def\RIJp{\cR_p(\cI,\cJ)}
\def\RIJq{\cR_q(\cI,\cJ)}

\def\TWXJp{\cT_p(\bomega;\cX,\cJ)}
\def\TWXJq{\cT_q(\bomega;\cX,\cJ)}
\def\TWpXJp{\cT_p(\bomega_p;\cX,\cJ)}
\def\TWqXJq{\cT_q(\bomega_q;\cX,\cJ)}
\def\TWJq{\cT_q(\bomega;\cJ)}
\def\TWqJq{\cT_q(\bomega_q;\cJ)}

 \def \xbar{\overline x}
  \def \ybar{\overline y}

\def\cA{{\mathcal A}}
\def\cB{{\mathcal B}}
\def\cC{{\mathcal C}}
\def\cD{{\mathcal D}}
\def\cE{{\mathcal E}}
\def\cF{{\mathcal F}}
\def\cG{{\mathcal G}}
\def\cH{{\mathcal H}}
\def\cI{{\mathcal I}}
\def\cJ{{\mathcal J}}
\def\cK{{\mathcal K}}
\def\cL{{\mathcal L}}
\def\cM{{\mathcal M}}
\def\cN{{\mathcal N}}
\def\cO{{\mathcal O}}
\def\cP{{\mathcal P}}
\def\cQ{{\mathcal Q}}
\def\cR{{\mathcal R}}
\def\cS{{\mathcal S}}
\def\cT{{\mathcal T}}
\def\cU{{\mathcal U}}
\def\cV{{\mathcal V}}
\def\cW{{\mathcal W}}
\def\cX{{\mathcal X}}
\def\cY{{\mathcal Y}}
\def\cZ{{\mathcal Z}}
\def\Ker{{\mathrm{Ker}}}
\def\g{{\mathrm{gcd}}}
\def\rad{{\mathrm{rad}}}
\def\rank{{\mathrm{rank}}}
\def\Gal{{\mathrm{Gal}}}

\def\rn{{\mathrm{new}}}

\def\Psf {\mathsf P}
\def\Qsf {\mathsf Q}
\def\fsf {\mathsf f}
\def\ksf {\mathsf k}
\def\hsf {\mathsf h}
\def\isf{\mathsf O}

\def\Dsf {\mathsf D}

\def\NmQR{N(m;Q,R)}
\def\VmQR{\cV(m;Q,R)}

\def\Xm{\cX_m}

\def \A {{ \mathbb A}}
\def \B {{ \mathbb B}}
\def \C {{ \mathbb C}}
\def \N {{ \mathbb N}}
\def \F {{ \mathbb F}}
\def \G {{\mathbb G}}
\def \L {{\mathbb L}}
\def \K {{\mathbb K}}
\def \PP {{\mathbb P}}
\def \Q {{\mathbb Q}}
\def \R {{\mathbb R}}
\def \Z {{\mathbb Z}}

\def \fS{\mathfrak S}
\def \fA{\mathfrak A}
\def \fB{\mathfrak B}
\def \fC{\mathfrak C}
\def \fD{\mathfrak D}

\def\e{{\mathbf{\,e}}}
\def\ep{{\mathbf{\,e}}_p}
\def\eq{{\mathbf{\,e}}_q}
\def\er{{\mathbf{\,e}}_R}
\def\\{\cr}
\def\({\left(}
\def\){\right)}
\def\fl#1{\left\lfloor#1\right\rfloor}
\def\rf#1{\left\lceil#1\right\rceil}

\def\Tr{{\mathrm{Tr}}}
\def\Nm{{\mathrm{Nm}}}
\def\Im{{\mathrm{Im}}}

\def \oF {\overline \F}

\newcommand{\pfrac}[2]{{\left(\frac{#1}{#2}\right)}}

\def \Prob{{\mathrm {}}}
\def\e{\mathbf{e}}
\def\ep{{\mathbf{\,e}}_p}
\def\epp{{\mathbf{\,e}}_{p^2}}
\def\em{{\mathbf{\,e}}_m}

\def\Res{\mathrm{Res}}
\def\Orb{\mathrm{Orb}}

\def\vec#1{\mathbf{#1}}
\def \va{\vec{a}}
\def \vb{\vec{b}}
\def \vb{\vec{c}}
\def \vm{\vec{m}}
\def \vu{\vec{u}}
\def \vv{\vec{v}}
\def \vx{\vec{x}}
\def \vy{\vec{y}}
\def \vz{\vec{z}}

\newcommand{\gvec}{\boldsymbol{\gamma}}

\def\flp#1{{\left\langle#1\right\rangle}_p}
\def\T {\mathsf {T}}

\def\sfG {\mathsf {G}}
\def\sfK {\mathsf {K}}

\def\mand{\qquad\mbox{and}\qquad}

\title[Equations with elements of finitely generated groups]  
{Systems of equations with elements of finitely generated groups  and with 
linear recurrence sequences}

\author[S. Bhakta]{Subham Bhakta}
\address{School of Mathematics and Statistics, University of New South Wales, Sydney, NSW 2052, Australia.} 
\email{subham.bhakta@unsw.edu.au}

\author[A. Ostafe]{Alina Ostafe}
\address{School of Mathematics and Statistics, University of New South Wales, Sydney, NSW 2052, Australia.} 
\email{alina.ostafe@unsw.edu.au}

\author[I. E. Shparlinski]{Igor E. Shparlinski}
\address{School of Mathematics and Statistics, University of New South Wales, Sydney, NSW 2052, Australia.} 
\email{igor.shparlinski@unsw.edu.au}

\begin{abstract} We obtain new and essentially optimal bounds on the number of solutions 
of diagonal systems of $k$ polynomial equations in $s$ variables from an arbitrary finite subset of a finitely generated group  
 in a field of characteristic zero.  Our bounds generalise and sometimes improve previous results 
 of J.~Bourgain, M.~Z.~Garaev, S.~V.~Konyagin, and I.~E.~Shparlinski (2014), for $k=1$, and 
 of A.~Manning, A.~Ostafe  and I.~E.~Shparlinski (2026), for $k=2$. In turn, this improvement 
 leads to sharpening a result of A.~Manning, A.~Ostafe  and I.~E.~Shparlinski (2026) concerning 
 arithmetic statistics of certain matrices. 
 
We also obtain similar results for  $s$-tuples of linear recurrence sequences, 
 in the case of linear equations and quadratic equations.
\end{abstract}  

\subjclass[2020]{11B37, 11D45, 11D61}
\keywords{$\cS$-unit equations, linear recurrence sequences}

\maketitle
\tableofcontents
\section{Introduction}
\subsection{Set-up}
Let $K$ be an algebraically closed field of characteristic zero. 

Our main goal is to estimate the number of solutions to system of $k$ diagonal equations in $s$ variables, see~\eqref{eq:GenEqnsSubgr}, in elements of a finite subset $\cW$  of a finitely generated subgroup $\Gamma \subseteq K^{\ast}$. 
Our bounds depend only on the cardinality $W = \sharp\, \cW$ (also on generators of $\Gamma$ and the parameters $k$ and $s$) but do not depend on the particular choice of elements of $\cW$ or the 
coefficients of equations in~\eqref{eq:GenEqnsSubgr}.  Such equations have been considered in~\cite{BGKS} for $k=1$ and in~\cite{MOS} for $k=2$. Here we generalise and, for $k=2$,  
also improve these results. In fact, in many natural cases our results are optimal. 

We also obtain a {\it paucity\/} result for symmetric systems of equations~\eqref{eq:VMVT}. 
We note that our result has a flavour of so-called {\it diameter-free\/} estimate, which has been studied 
in the context of the Vinogrdov Mean Value Theorem, see~\cite{Mud, Wool2}. 

Certainly one of our main tools comes from bounds on the number of degenerate solutions to a single equation in elements of $\Gamma$. We combine it with new combinatorial arguments to count, 
in an optimal way,  the total number of solutions to a system of diagonal equations.

We also use similar ideas to study another apparently new question concerning equations with 
linear recurrence sequences. 

We recall that a linear recurrence sequence  of order $d\ge 1$ is a sequence $\vu=\(u(x)\)_{x \ge 1}$ of elements in $K$, which satisfies a \textit{minimal} relation
\begin{equation}\label{eq:rel}
u(x+d)=c_{d-1}u(x+d-1)+\ldots+c_0u(x),\quad x \ge 1,
\end{equation}
with $c_0,\ldots,c_{d-1}\in K$, $c_0 \ne 0$, and some initial values $u(1),\dots,u(d) \in K$. 

We say that $\vu$ is simple if its  characteristic  polynomial
\[
f(T)=T^d-c_{d-1}T^{d-1}-\ldots-c_0 \in K[T],
\]
has only simple roots  $\alpha_1,\ldots,\alpha_d\in K$. We call $\alpha_1,\ldots,\alpha_d$ the characteristic roots of $\vu$. In this case, we can write
\begin{equation}
\label{eq:LRS char roots}
u(x)=\beta_{1} \alpha_1^{x}+\ldots+\beta_{d} \alpha_d^{x},\quad x\ge 1,
\end{equation}
for some $\beta_1,\ldots,\beta_d \in K$ depending on the initial values $u(1),\dots,u(d)$ and $\alpha_1,\ldots, \alpha_d$. Since the relation in~\eqref{eq:rel} is in minimal form, we have
\begin{equation}
\label{eq:char roots}
\alpha_1,\ldots,\alpha_d,\beta_1,\ldots,\beta_d \in K^\ast.
\end{equation}

We say that $\vu$ is non-degenerate, if neither any of the
$\alpha_1,\ldots,\alpha_d$, nor any ratio $\alpha_{i}/\alpha_{j}$ with $1\le i\ne j\le d$ is a root of unity. 
Throughout the whole paper, we assume that $\vu$ is simple and non-degenerate.

By the classical Skolem--Mahler--Lech theorem, any such $\vu$ contains only finitely many zeros. For the best known upper bound on the number of zeros, see~\cite[Theorem~1.2]{AV11}.
Here, we estimate the number of solutions to linear and non-linear equations with elements in a linear recurrence sequence. 

We would like to {\it emphasise\/} that all our bounds, for both finitely generated subgroups and
linear recurrence sequences, are fully uniform with respect to the coefficients of the 
equations. This is crucial for some applications, in particular, to some questions of arithmetic statistics 
of matrices; see~Section~\ref{sec:comments} and Appendix~\ref{app:CountMatr_CharPoly}. 

\subsection{Equations with elements of  finitely generated subgroups}\label{sec:main}
Let us recall that $\cW$ is a finite subset of a finitely generated subgroup $\Gamma \subseteq K^{\ast}$. 
First, we generalise and improve~\cite[Lemma~3.4]{ManOstShp}.

Let $k\ge 1$ be any integer and $\vec{A}=(\vec{a}_1,\ldots,\vec{a}_k)\in K^{(s+1)\times k}$. We denote by $M_{\vec{A}}(\cW)$ the number of solutions to the system of equations 
\begin{equation}\label{eq:GenEqnsSubgr}
\begin{split}
&a_{j,1}w_1^j +\ldots+a_{j,s}w_s^j = a_{j,0}, \qquad j =1, \ldots, k,\\
& \qquad  \qquad  \qquad w_1,\ldots, w_s  \in \cW.
\end{split}
\end{equation}

We say that the matrix of the coefficients $\vec{A}=(\vec{a}_1,\ldots,\vec{a}_k)\in K^{(s+1)\times k}$ is {\it non-degenerate\/} if 
for any $1\le I \le \min\{k,s\}$ and any subset
$\cI\subseteq \{1,\ldots,s\}$ with $\sharp\, \cI=I$, the system of $I$ equations in $I$ variables
\begin{equation}\label{eq:NonDegen A}
\sum_{i\in \mathcal I} a_{j,i}c_i^{j}=0,
\qquad j=1,\ldots,I,
\end{equation}
has no solution in $c_i\in K^\ast$, $i\in\mathcal I$.

\begin{thm}\label{thm:SystSubgrGen}
Let  $\cW \subseteq \Gamma$ be a finite subset of cardinality $W$ of a   subgroup of $\Gamma \subseteq K^{\ast}$ of fixed rank and let $\vec{A}=(\vec{a}_1,\ldots,\vec{a}_k)\in  K^{(s+1)\times k}$ be non-degenerate. Then we have 
\[
M_{\vec{A}}(\cW) \ll W^{\lfloor (s-m)/(k+1) \rfloor},
\]
where 
\[
m=  \begin{cases} 0,  & \text{if }~a_{1,0}=\ldots=a_{k,0}=0,\\
\min\left\{1\le j \le k:~a_{j,0}\ne 0\right\}, & \text{otherwise}. 
\end{cases} 
\]
\end{thm}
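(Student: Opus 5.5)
The plan is to classify solutions according to which subsums of each equation vanish, and to turn this into a graph on the variables. The tool is the uniform bound for $\cS$-unit equations of Evertse--Schlickewei--Schmidt, which is the tool used in~\cite{BGKS}. For a group $\Gamma$ of fixed rank, a \emph{non-degenerate} equation $b_1x_1+\ldots+b_tx_t=1$ has $O(1)$ solutions $x_i\in\Gamma$, with the constant depending only on $t$ and the rank and not on the $b_i$. Equivalently, a non-degenerate homogeneous equation $b_1x_1+\ldots+b_tx_t=0$ has $O(1)$ solutions up to a common scalar factor.

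\textbf{Step 1: decomposition into minimal vanishing subsums.} Fix a solution $(w_1,\ldots,w_s)$ of~\eqref{eq:GenEqnsSubgr}. For each $j$, write the $j$-th equation as $\sum_{i=0}^s b_{j,i}=0$, where $b_{j,i}=a_{j,i}w_i^j$ for $i\ge 1$ and $b_{j,0}=-a_{j,0}$.
\begin{itemize}
\item Drop the zero terms and partition the remaining indices into blocks. Each block is a vanishing subsum having no proper vanishing subsum.
\item The number of possible choices of these $k$ partitions is $O(1)$, so we may fix them.
\item In a block of equation $j$ not containing $0$, the unit theorem applied to the $j$-th powers shows that the ratios $w_i^j/w_{i'}^j$ take $O(1)$ values.
\item Since the torsion of $\Gamma$ is finite, the ratios $w_i/w_{i'}$ also take $O(1)$ values.
\item In a block containing $0$, the equation is inhomogeneous, so each $w_i$ in it takes $O(1)$ values.
\end{itemize}

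\textbf{Step 2: the graph and its components.} Build a graph on $\{0,1,\ldots,s\}$ by joining two indices whenever they lie in a common block of some equation. Within a connected component not containing $0$, all $w_i$ are determined up to $O(1)$ choices by any one of them. In the component $C_0$ containing $0$, all $w_i$ are determined up to $O(1)$ choices. Hence
\[
M_{\vec{A}}(\cW)\ll W^{c},
\]
where $c$ is the number of components not containing $0$. If $w_i$ is an isolated vertex, then $a_{j,i}=0$ for all $j$, and this index is simply its own component.

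\textbf{Step 3: size of the components.} It remains to show $c\le \lfloor (s-m)/(k+1)\rfloor$. This is where non-degeneracy of $\vec{A}$ enters.
\begin{itemize}
\item Let $C\subseteq\{1,\ldots,s\}$ be a component not containing $0$. Every block lies inside a single component, so $\sum_{i\in C}a_{j,i}w_i^j=0$ for every $j=1,\ldots,k$. If $\sharp\, C\le k$, take $I=\sharp\, C$ and $c_i=w_i\in K^\ast$. This gives a solution of~\eqref{eq:NonDegen A}, a contradiction. Thus $\sharp\, C\ge k+1$.
\item Suppose $m\ge1$. Then $b_{m,0}\ne0$, so $0$ lies in a block of equation $m$ containing at least one further variable. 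Hence $C_0'=C_0\setminus\{0\}$ is nonempty.
\item For $j<m$ we have $a_{j,0}=0$, so $\sum_{i\in C_0'}a_{j,i}w_i^j=0$ for $j=1,\ldots,m-1$. If $\sharp\, C_0'\le m-1$, non-degeneracy with $I=\sharp\, C_0'$ again gives a contradiction. Thus $\sharp\, C_0'\ge m$.
\item If $m=0$, there is nothing to check.
\end{itemize}
Therefore the free components partition a set of at most $s-m$ variables into parts of size at least $k+1$, which gives $c\le\lfloor (s-m)/(k+1)\rfloor$.

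\textbf{Main obstacle.} The step needing most care is making all the $O(1)$ constants genuinely uniform in the coefficients. This requires applying the unit theorem to the blocks of each equation separately, with the block coefficients $a_{j,i}$ arbitrary. It also requires checking that the minimality of the blocks is exactly the non-degeneracy hypothesis needed there. Passing from $w_i^j$ to $w_i$ via the finite torsion of $\Gamma$ is the other point where uniformity must be confirmed.
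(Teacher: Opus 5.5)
Your argument is correct, and it is organised differently from the paper's. The paper proves the homogeneous case by induction on $k$, as follows.
\begin{itemize}
\item It starts from a partition into parts of size at least $k$ with controlled ratios.
\item For each part of size exactly $k$, it uses \eqref{eq:NonDegen A} to choose \emph{one} equation $j$ in which that part's subsum is nonzero.
\item It splits only that equation, with the parts as grouped variables, into minimal vanishing subsums, which attaches the part to another part $\rho(r_0)$.
\item It then merges overlapping sets via connected components and chains ratios along paths.
\end{itemize}
The inhomogeneous case is a separate induction on $s$: the first $m-1$ equations give parts of size at least $m$, the $m$-th equation pins down one whole part, and one recurses on the remaining $s-m$ variables with new constant terms.

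You instead split all $k$ equations at once, with the constant term as an extra vertex $0$. Your key observation is that every connected component of the resulting graph is a vanishing subsum of all $k$ equations, and $C_0\setminus\{0\}$ is one of the first $m-1$ equations. So the non-degeneracy hypothesis applies to each component verbatim. This removes both inductions and all the merging bookkeeping, and it handles $m=0$ and $m\ge 1$ in one stroke. It also shows that \eqref{eq:NonDegen A} is exactly the hypothesis the argument needs. The same one-shot argument gives the bound on irreducible solutions in the proof of Theorem~\ref{thm:VMVT}, with \eqref{eq:NonDegen A} replaced by Lemma~\ref{lem:VMVTsmalls} together with \eqref{eq:nothingcommon}. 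The paper's incremental scheme (a partition from the lower equations, then one more equation analysed against it) buys mainly a uniform template, which it reuses for Theorems~\ref{thm:VMVT} and~\ref{thm:powersumwithLRS}.

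One small correction: to pass from $w_i^j/w_{i'}^j$ to $w_i/w_{i'}$, do not appeal to finiteness of the torsion of $\Gamma$, since its size is not bounded in terms of the rank. Use instead that $X^j=c$ has at most $j\le k$ roots in $K$; this keeps all constants depending only on $s$, $k$ and the rank.
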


In Section~\ref{sec:low} we show that the bound of Theorem~\ref{thm:SystSubgrGen} is tight.

We now give a natural example of coefficients $\(a_{j,i}\)_{\substack{1\le j\le k\\ 1\le i \le s}}$ that satisfy the required conditions in Theorem~\ref{thm:SystSubgrGen}.

\begin{cor}\label{cor:PowerSumVals}
For any $\vec{a}=(a_1,\ldots, a_k)\in K^k$, we denote by $M^*_{\vec{a}}(\cW)$ the number of solutions to the system of equations 
    \begin{equation}\label{eq:GenEqnsSubgrAllOne}
\begin{split}
&w_1^j +\ldots+w_s^j = a_j, \qquad j =1, \ldots, k,\\
& \qquad  \qquad  w_1,\ldots, w_s  \in \cW.
\end{split}
\end{equation}
Then we have 
 \[
 M^*_{\vec{a}}(\cW) \ll W^{\lfloor (s-m)/(k+1) \rfloor},
\]
where 
\[
m=  \begin{cases} 0,  & \text{if }~a_{1}=\ldots=a_{k}=0,\\
\min\left\{1\le j \le k:~a_{j}\ne 0\right\}, & \text{otherwise}. 
\end{cases} 
\]
\end{cor}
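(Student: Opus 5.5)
The plan is to deduce Corollary~\ref{cor:PowerSumVals} directly from Theorem~\ref{thm:SystSubgrGen}. We take $a_{j,i}=1$ for $1\le j\le k$, $1\le i\le s$, and $a_{j,0}=a_j$. With this choice the quantity $m$ in Theorem~\ref{thm:SystSubgrGen} coincides with the $m$ in the corollary, and $M^*_{\vec a}(\cW)=M_{\vec A}(\cW)$. So the only thing to check is that this coefficient matrix $\vec A$ is non-degenerate in the sense of~\eqref{eq:NonDegen A}.

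Fix $1\le I\le \min\{k,s\}$ and a subset $\cI$ with $\sharp\,\cI=I$. We must show that the system
\[
\sum_{i\in\cI} c_i^j=0,\qquad j=1,\ldots,I,
\]
has no solution with all $c_i\in K^\ast$. The power sums $p_1,\ldots,p_I$ of the $I$ variables $c_i$ all vanish. Since $K$ has characteristic zero, Newton's identities $j e_j=\sum_{t=1}^{j}(-1)^{t-1}e_{j-t}p_t$ allow division by $j\le I$. By induction they give that the elementary symmetric polynomials satisfy $e_1=\cdots=e_I=0$.

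It follows that
\[
\prod_{i\in\cI}(T-c_i)=T^I ,
\]
so every $c_i=0$. This contradicts $c_i\in K^\ast$, and hence $\vec A$ is non-degenerate.

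The argument has no real obstacle. The one point needing care is that the division by $j$ in Newton's identities is exactly where characteristic zero is used; in characteristic $p\le I$ the claim would fail. Alternatively, one could avoid Newton's identities by grouping equal values among the $c_i$. If the distinct values are $\gamma_1,\ldots,\gamma_r$ with multiplicities $n_1,\ldots,n_r$, then $\sum_t n_t\gamma_t^j=0$ for $j=1,\ldots,r$. The Vandermonde matrix $(\gamma_t^{j})$ is invertible since the $\gamma_t$ are distinct and nonzero, which forces $n_t=0$ in $K$. This is impossible in characteristic zero.
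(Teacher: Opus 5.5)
Your proposal is correct and follows the paper's route. You specialise Theorem~\ref{thm:SystSubgrGen} to $a_{j,i}=1$, $a_{j,0}=a_j$, and check non-degeneracy via Newton's identities, which is what the paper does by padding the right-hand side with powers of zeros and invoking Lemma~\ref{lem:VMVTsmalls}; your Vandermonde alternative is also valid but not needed.
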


Indeed, 
suppose that there exists a non-empty subset $\cI\subseteq \{1,\ldots,s\}$ with $ I=\sharp\, \cI \le  \min\{k,s\}$, such that the system of equations
\[
\sum_{i\in \mathcal I} c_i^{j}=0,
\qquad j=1,\ldots,I,
\]
has some solutions with $c_i\in K^\ast$, $i\in\mathcal I$. Then (rewriting the right-hand side above as a sum of powers of zeroes) by Lemma~\ref{lem:VMVTsmalls}, we have $c_i=0$ for any $i\in \cI$,  which is a contradiction. This shows that the required condition in Theorem~\ref{thm:SystSubgrGen} is satisfied. 

As an application of Theorem~\ref{thm:SystSubgrGen}, we give sharper versions of some results of~\cite{ManOstShp} on 
 counting some special matrices, see Appendix~\ref{app:CountMatr_CharPoly} for more details.

Furthermore, motivated by results of Wooley~\cite{Wool1,Wool3}, 
one can also ask about the so-called {\it paucity\/} of solutions to the 
system of equations
\begin{equation}\label{eq:VMVT}
\begin{split}
&w_1^j +\ldots+w_s^j = w_{s+1}^j +\ldots+w_{2s}^j,  \qquad j =1, \ldots k,\\
& \qquad  \qquad  w_1,\ldots, w_s, w_{s+1},\ldots, w_{2s}  \in \cW, 
\end{split}
\end{equation}
that is, solutions 
with the additional restriction 
\begin{equation}\label{eq:notpermutation}
w_1,\ldots, w_s \quad \text{is not a permutation of} \quad  w_{s+1},\ldots, w_{2s}.
\end{equation}

More precisely, let us denote by $J_{s,k}(\cW)$ the number of solutions to the 
system of equations~\eqref{eq:VMVT} with additional condition~\eqref{eq:notpermutation}.

We observe that without the condition~\eqref{eq:notpermutation}  the 
system of equations~\eqref{eq:VMVT}  has at least $W^s$ solutions. So we
now seek bounds of the form $J_{s,k}(\cW) = o(W^s)$.

First of all, note that Lemma~\ref{lem:VMVTsmalls} implies
\begin{equation}\label{eq:Jss}
J_{s,k}(\cW)=0\quad\text{for}\quad 1\le s\le k.
\end{equation}

Therefore, we need to estimate $J_{s,k}$ for $1\le k \le s-1$. It is clear that the coefficients in~\eqref{eq:VMVT} do not satisfy the required condition in Theorem~\ref{thm:SystSubgrGen}. We prove the following estimate.

\begin{thm}\label{thm:VMVT}
Let  $k,s$ be fixed integers with $1\le k\le s-1$, and let $\cW \subseteq \Gamma$ be a finite subset of cardinality $W$ of a subgroup of $\Gamma \subseteq K^{\ast}$ of fixed rank. Then we have
\[J_{s,k}(\cW) \ll W^{s-k+1}.\]
\end{thm}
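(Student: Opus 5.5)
We bound $J_{s,k}(\cW)$ by studying the first equation ($j=1$) as an $\cS$-unit type equation and splitting its solutions according to vanishing subsums. Write the system~\eqref{eq:VMVT} as
\[
\sum_{i=1}^{2s}\varepsilon_i w_i^j=0,\qquad j=1,\ldots,k,
\]
with $\varepsilon_i=1$ for $i\le s$ and $\varepsilon_i=-1$ for $i>s$. Any solution induces a partition of $\{1,\ldots,2s\}$ into minimal vanishing subsums of the first equation, that is, subsets $\cP_1,\ldots,\cP_r$ with $\sum_{i\in\cP_t}\varepsilon_i w_i=0$ and no proper vanishing subsum. Since there are $O(1)$ partitions, it suffices to bound the count for a fixed partition.

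For a fixed partition, each block of size $\ge 2$ is a non-degenerate unit equation in the ratios $w_i/w_{i_0}$, $i \in \cP_t$. By the Evertse--Schlickewei--Schmidt bound (presumably the lemma on non-degenerate solutions used for Theorem~\ref{thm:SystSubgrGen}), these ratios take $O(1)$ values. Hence each block contributes at most $W$ choices, and the total is $\ll W^{r}$. Blocks have size $\ge 2$, so $r\le s$. The task is therefore to show that, on solutions which are not permutations, one can either force $r\le s-k+1$ or save powers using the higher equations.

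The key combinatorial step is to handle blocks of size exactly $2$ with opposite signs, i.e.\ $w_i=w_{i'}$ with $i\le s<i'$. These are the blocks that generate permutation solutions. Suppose the partition contains $p$ such "matched pairs". Cancelling them from all $k$ equations leaves a smaller system of the same shape, with $s'=s-p$ on each side, which still must not be a permutation. Using~\eqref{eq:Jss} (Lemma~\ref{lem:VMVTsmalls}), this forces $s'\ge k+1$. On the remaining $2s'$ variables, the first-equation partition has no matched pairs. Same-sign pairs cannot vanish, since $w_a+w_b=0$ is possible only via $w_b=-w_a$, and that is still a block with $O(1)$ ratio. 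So each remaining block either has size $\ge3$ or is a pair $w_a=-w_b$ with equal signs.

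Now use the higher equations. Fix the $O(1)$ ratio data: in each remaining block write $w_i=\lambda_i z_t$, with one free parameter $z_t\in\cW$ per block. Then equations $j=1,\ldots,k$ become a system
\[
\sum_t b_{j,t} z_t^j=0,\qquad j=2,\ldots,k,
\]
where $b_{j,t}=\sum_{i\in\cP_t}\varepsilon_i\lambda_i^j$. We then apply Theorem~\ref{thm:SystSubgrGen}, or its underlying induction, to the variables $z_t$, after removing those $t$ with $b_{j,t}=0$ for all $j$. The target is a bound of $W^{s'-k+1}$ for the contribution of the unmatched part. Multiplied by $W^{p}$ from the matched pairs, this gives $W^{s-k+1}$.

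\textbf{Main obstacle.} The difficulty is exactly this last counting, for two reasons. The coefficients $b_{j,t}$ need not be non-degenerate. Also, blocks can be "silent", with $b_{j,t}=0$ for all $j\le k$, which would give free parameters. To deal with the first issue, we would iterate the argument: take a minimal vanishing subsum decomposition of the $j=2$ equation in the $z_t$, merge blocks whose ratios are then fixed, and so on up to $j=k$. Each new equation that is not identically satisfied by the current merged blocks should reduce the number of free parameters by at least one. For the silent blocks, we would show the following: a block of the original variables with $\sum_{i\in\cP}\varepsilon_i w_i^j=0$ for all $j\le k$ is itself a solution of~\eqref{eq:VMVT} of smaller size. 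By Lemma~\ref{lem:VMVTsmalls}, it is then either a permutation, which is excluded by minimality and the absence of matched pairs, or it has $\ge 2(k+1)$ elements. If its size is $\ge 2(k+1)$, the block costs only one parameter for $\ge 2k+2\ge k+1$ variables, which is still within the budget $s'-k+1$ since $s'\ge k+1$. Tallying one parameter per final merged block against at least $k-1$ independent reductions gives the claimed exponent.
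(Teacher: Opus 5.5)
Your set-up is sound: minimal vanishing subsums of the first equation, $O(1)$ ratio data via Lemma~\ref{lem:non-deg_zero-sum}, cancelling matched pairs, and merging blocks equation by equation. The gap is the decisive step, which you yourself flag as the main obstacle: showing that the number $h$ of final free parameters on the unmatched part is at most $s'-k+1$. Neither claim you offer to close it holds.

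\textbf{The $k-1$ reductions.} It is true that an equation not identically satisfied by the current blocks merges at least two of them. However, an equation can be identically satisfied by all current blocks; for instance, blocks $\{c,c\zeta_3,c\zeta_3^2\}$ are silent for $j=2$. So there is no reason to get one reduction per equation, i.e.\ $k-1$ in total.

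\textbf{The silent-block fallback.} This fails on both sides of your dichotomy.
\begin{itemize}
\item A merged block can be a permutation block. A first-equation block $\{w,-w\}$ on the left and another $\{w,-w\}$ on the right are not matched pairs, so they are not removed. They can merge at $j=2$ into a block that is silent for every $j$. Minimality for the first equation says nothing about merged blocks, and you only removed matched pairs that happen to be blocks of the first partition, not all values common to both sides.
\item Write $p_+,p_-$ for the numbers of terms of each sign in a non-permutation silent block. Lemma~\ref{lem:VMVTsmalls} gives only $\max(p_+,p_-)\ge k+1$, not $2(k+1)$ elements. The $k+1$ same-sign terms $c\zeta_{k+1}^e$ (the paper's extremal example) form such a block of size exactly $k+1$.
\end{itemize}
Finally, ``one parameter for $\ge 2k+2$ variables is within the budget'' is a local statement. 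It does not yield the global inequality $h\le s'-k+1$.

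\textbf{How the paper closes it.} What is missing is a structural statement on block sizes combined with a global count. The paper first reduces to irreducible systems, where the two sides share no value at all, via $J_{s,k}(\cW)=\sum_{t=k+1}^{s} W^{s-t}J^*_{t,k}(\cW)$. Then, by the induction on $k$ from the proof of Theorem~\ref{thm:SystSubgrGen}, it shows that the $2t$ indices split into blocks of size at least $k+1$ with $O(1)$ ratio data. The key point is that a block of size exactly $k$ cannot have all its power subsums for $j=1,\ldots,k$ vanish, by Lemma~\ref{lem:VMVTsmalls} together with disjointness. Hence it is forced to merge with another block in a minimal vanishing subsum of some such equation. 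This gives $J^*_{t,k}(\cW)\ll W^{\lfloor 2t/(k+1)\rfloor}$, and since $t\ge k+1$ the sum is $\ll W^{s-k+1}$.

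\textbf{How your route could be repaired.} This needs an argument you did not give.
\begin{itemize}
\item Process $j=1,\ldots,k$ in order. Then every final block is itself a solution of all $k$ equations.
\item Hence, for $k\ge 2$, each final block is either a balanced permutation block of size $\ge 4$ (size-$2$ ones are exactly your removed matched pairs), or a non-permutation block with $\max(p_+,p_-)\ge k+1$.
\item If there is only one non-permutation block, it is balanced, hence of size $\ge 2k+2$.
\item Counting block sizes then gives $h\le s'-k+1$ whenever $s'\ge k+1$.
\end{itemize}
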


As shown in Section~\ref{sec:low} the bound of Theorem~\ref{thm:VMVT} is also tight.

We also remark that Theorem~\ref{thm:VMVT} implies that  the total number of solutions to the 
system of equations~\eqref{eq:VMVT} (without any  additional conditions) is $M_s(W)  + O\(W^{s-k+1}\)$, 
for any $s \ge k+1$, where  
\[
M_s(W)  =  \sum_{j = 1}^s \binom{W}{j}  \sum_{\substack{m_1 + \ldots + m_j = s\\m_1, \ldots, m_j \ge 1}} \(\frac{s!}{m_1!\ldots m_j!}\)^2
\] (clearly, the terms with $j \le s-k+1$ 
can be absorbed in the error term  $O\(W^{s-k+1}\)$).

\subsection{Equations with elements of linear recurrence sequences}
Let $\vu=\(u(x)\)_{x \ge 1}$ be a simple and non-degenerate linear recurrence  of order $d\ge 1$ with characteristic roots $\alpha_1,\ldots,\alpha_d$, which are fixed in the rest of the paper. Let $s\ge 1$ be an integer and $\va=(a_0,a_1,\ldots, a_s)\in K\times (K^\ast)^{s}$ be an $(s+1)$-tuple. We denote by $L_{\va}(H)$ the number of integer solutions to the equation
\[
a_1u(x_1)+\ldots+a_su(x_s)=a_0, \qquad 1\le x_1,\ldots, x_s \le H.
\]

Clearly, we only need to consider the two cases 
\[
a_0 = 0 \mand a_0 = 1.
\]

Note that similar equations modulo primes have been studied in~\cite{BBG}, relying on the estimates for the exponential sums associated to the linear recurrence sequences over prime fields. Here to address the more difficult problem that we consider, we argue in the same spirit as in the proof of Theorem~\ref{thm:SystSubgrGen}. We use similar combinatorial arguments for bounding the number of degenerate solutions to the system of diagonal equations, determined by~\eqref{eq:LRS char roots}.

We prove the following in Section~\ref{sec:GenEq}, in fact we establish there a more general 
Theorem~\ref{thm:anycombination}.

\begin{thm}\label{thm:lrslineqn}
Let $\vu$ be simple and non-degenerate.  Then uniformly over $\va \in K\times (K^\ast)^{s}$, we have
 \[
    L_{\va}(H)\ll \begin{cases} H^{\lfloor s/2 \rfloor} & \text{if} \ a_0 = 0,\\
 H^{\lfloor (s-1)/2 \rfloor}& \text{if} \ a_0 = 1. 
 \end{cases} 
\]
\end{thm}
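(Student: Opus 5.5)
Substituting~\eqref{eq:LRS char roots} turns the equation into a single $\cS$-unit type equation
\[
\sum_{i=1}^s\sum_{\ell=1}^d a_i\beta_\ell\,\alpha_\ell^{x_i} = a_0
\]
in the $sd$ unknowns $\alpha_\ell^{x_i}$. These lie in the finitely generated group $\Gamma=\langle\alpha_1,\ldots,\alpha_d\rangle$, whose rank is fixed, and all coefficients are non-zero by~\eqref{eq:char roots}. The plan is to argue by induction on $s$ along the lines of the proof of Theorem~\ref{thm:SystSubgrGen}. The basic input is the uniform bound of Evertse--Schlickewei--Schmidt: the number of non-degenerate solutions (no vanishing proper subsum) of a unit equation in a fixed number of variables, with a non-zero right-hand side or in projective form, is bounded solely in terms of the rank and the number of terms, uniformly in the coefficients.

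Fix a solution $(x_1,\ldots,x_s)$ and take a partition of the $sd$ (or $sd+1$, if $a_0=1$) terms into minimal vanishing subsums. For each of the boundedly many partition types we count the solutions with that type. Consider a block, i.e.\ a minimal vanishing subsum. Suppose it contains terms $\alpha_\ell^{x_i}$ and $\alpha_{\ell'}^{x_i}$ coming from the same index $i$. Then Evertse--Schlickewei--Schmidt forces the ratio $(\alpha_\ell/\alpha_{\ell'})^{x_i}$ into a finite set. Since $\alpha_\ell/\alpha_{\ell'}$ is not a root of unity, $x_i$ takes $O(1)$ values.

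Next suppose a block contains terms from at least two distinct indices $i\ne i'$. Fixing the variables of one index, the others in the block are determined up to $O(1)$ choices, by the finiteness of ratios together with non-degeneracy. So each block essentially loses one degree of freedom. A block that contains the constant $a_0$ pins down all its variables, again using that no $\alpha_\ell$ is a root of unity.

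The main point is a combinatorial bookkeeping step. Build a graph on the indices $\{1,\ldots,s\}$ in which $i\sim i'$ whenever some block contains terms from both. Every index $i$ must lie in some block, since all $sd$ terms are covered. An isolated index $i$, whose blocks contain only terms with index $i$ and possibly $a_0$, forces $x_i=O(1)$, by the argument above or because a single term $a_i\beta_\ell\alpha_\ell^{x_i}$ cannot vanish. Hence every connected component of the graph is either a single index with $O(1)$ values or has size at least $2$ and contributes at most one free parameter ranging over $[1,H]$. Thus the number of free parameters is at most $\lfloor s/2\rfloor$.

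When $a_0=1$, the component containing the constant has all its variables fixed. Such a component has size at least $1$, and the remaining $s-1$ indices give at most $\lfloor (s-1)/2\rfloor$ parameters. Summing over the $O(1)$ partition types gives the claimed bounds.

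The step I expect to be hardest is justifying rigorously that a component of size $r\ge2$ costs only one parameter. Within one block, fixing one variable does bound the others, because ratios of distinct terms are finitely many and non-degeneracy lets us solve $\alpha_\ell^{x_i}/\alpha_{\ell'}^{x_{i'}}=c$ in $O(1)$ ways for $x_{i'}$. Propagating this along a spanning tree of the component then gives $O(H)$ solutions per component. One case needs care: $\ell=\ell'$, where the ratio $\alpha_\ell^{x_i-x_{i'}}$ determines only the difference $x_i-x_{i'}$. This still suffices, since $x_{i'}$ is determined once $x_i$ is fixed.
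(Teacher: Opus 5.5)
Your proof is correct, but it takes a different route from the paper.

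\textbf{The paper's route.} The paper derives Theorem~\ref{thm:lrslineqn} from the more general Theorem~\ref{thm:anycombination} by induction on $s$, removing one variable at a time. When $a_0=0$, it distinguishes whether $\sum_j\gamma_{s,j}\alpha_j^{x_s}$ vanishes or not. If it vanishes, $x_s$ takes $O(1)$ values by Skolem--Mahler--Lech and the homogeneous bound for $s-1$ variables applies. If not, $x_s$ has $H$ choices and the inhomogeneous bound $H^{\lfloor (s-2)/2\rfloor}$ for $s-1$ variables applies. When $a_0=1$, it strips off a maximal vanishing subsum. The complementary subsum is then a non-degenerate equation equal to $1$, which by Lemma~\ref{lem:non-deg} pins some $x_s$ to $O(1)$ values, and the argument recurses.

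\textbf{Your route.} You instead give a single structural argument: partition into minimal vanishing subsums, build the graph on indices, and show there is one free parameter per non-singleton component. Singletons and the component meeting the constant contribute none. This is the mechanism behind Lemma~\ref{lem:LinEq homog}, transplanted to recurrences. The only new ingredient is that two terms of the same index in one block force $(\alpha_\ell/\alpha_{\ell'})^{x_i}$ into a finite set. It is also essentially what the paper does later, in Section~\ref{sec:partition LRS}, for the quadratic system. There it is strengthened via Lemma~\ref{lem:x1vsx2} so that the differences $x_{i_1}-x_{i_2}$ lie in a finite set.

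\textbf{Comparison.} Your version has a few advantages:
\begin{itemize}
\item The uniform bound on the number of $x_i$ annihilating a single index's contribution comes directly out of the unit-equation theorem. The paper appeals to Skolem--Mahler--Lech, which by itself gives only finiteness, so the needed uniformity in the coefficients has to be supplied separately.
\item Your argument describes the shape of the solution set, not just its size.
\item It extends verbatim to Theorem~\ref{thm:anycombination} by discarding the terms with $\gamma_{i,j}=0$.
\end{itemize}
The paper's induction, in turn, is shorter and avoids the graph bookkeeping; the interplay between the homogeneous and inhomogeneous bounds does the counting automatically.

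Finally, your opening announcement of an induction on $s$ along the lines of Theorem~\ref{thm:SystSubgrGen} does not match what you actually do. The argument you wrote needs no induction at all.
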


See Section~\ref{sec:comments} for possible applications of  Theorem~\ref{thm:lrslineqn}. 

Moreover, for $\vec{A} = \(\vec{a}_1, \vec{a}_2\)$, where 
\[
\vec{a}_\nu=(a_{\nu,0}, a_{\nu,1},\ldots, a_{\nu,s})\in K\times (K^\ast)^{s}, \qquad \nu =1,2, 
\]
let $N_{\vec{A}}(H)$ denote the number of solutions to
the system of equations
\begin{equation}\label{eq:geneqnsLRS}
\begin{split}
&a_{1,1}u(x_1)+\ldots+a_{1,s}u(x_s) = a_{1,0},\\
&a_{2,1}u(x_1)^2+\ldots+a_{2,s}u(x_s)^2 = a_{2,0}, \\
& \qquad \qquad 1\le x_1,\ldots, x_s \le H.
\end{split}
\end{equation}

We study $N_{\vec{A}}(H)$ under the following additional assumption on the roots of the characteristic polynomial.

As usual, we say that $\alpha_{j_1},\alpha_{j_2}\in K^{\ast}$ are multiplicatively independent, if
\[
\alpha_{j_1}^{e_1} \alpha_{j_2}^{e_2}=1,\qquad e_1,e_2\in\mathbb Z,
\]
implies $e_1=e_2=0$.

We recall that $\alpha_1,\ldots,\alpha_d$ are the characteristic roots of $\vu$.

\begin{assumption}\label{asmp:onalphas}
We assume that any two distinct elements 
\[
\alpha_{j_1},\alpha_{j_2},\qquad 1\le j_1 \ne j_2\le d,
\]
are multiplicatively independent, and that none of the ratios
\[
\frac{\alpha_{j_1}\alpha_{j_2}}{\alpha_{j_3}\alpha_{j_4}},
\qquad 1\le j_1,j_2,j_3,j_4\le d,\quad \{j_1,j_2\}\ne \{j_3,j_4\},
\]
is a root of unity.
\end{assumption}

 We use similar ideas as in the proof of Theorem~\ref{thm:SystSubgrGen} to prove the following result.

\begin{thm}\label{thm:powersumwithLRS} Let $d\ge 2$ and assume that $\alpha_1,\ldots,\alpha_d$ satisfy Assumption~\ref{asmp:onalphas}. Then, uniformly over $\vec{a_1}\in K\times (K^{\ast})^{s}$ and $\vec a_2\in K^{(s+1)}$, satisfying
\begin{equation}\label{eqn:a2conds d>2}
a_{1,i_1}+a_{1,i_2}\ne 0\quad \mathrm{or} \quad a_{2,i_1}+a_{2,i_2}\ne 0, \quad  \text{for any } 1\le i_1,i_2\le s,
\end{equation}
we have
\[
N_{\vec A}(H)
\ll
\begin{cases}
H^{\lfloor s/3 \rfloor}, & \text{if }~a_{1,0}= a_{2,0}=0,\\
H^{\lfloor (s-1)/3 \rfloor}, & \text{otherwise}.
\end{cases}
\]
\end{thm}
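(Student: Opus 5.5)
We substitute $u(x)=\sum_{\ell=1}^d\beta_\ell\alpha_\ell^x$ into both equations of~\eqref{eq:geneqnsLRS}. This turns the system into two $\cS$-unit type equations in the group $\Gamma$ generated by $\alpha_1,\ldots,\alpha_d$, the $\beta_\ell$ and the coefficients. The first equation becomes
\[
\sum_{i=1}^s\sum_{\ell=1}^d a_{1,i}\beta_\ell\alpha_\ell^{x_i}=a_{1,0}.
\]
The second becomes
\[
\sum_{i=1}^s\sum_{\ell_1,\ell_2} a_{2,i}\beta_{\ell_1}\beta_{\ell_2}(\alpha_{\ell_1}\alpha_{\ell_2})^{x_i}=a_{2,0}.
\]
We then treat the combined system as one unit equation over the vector $(x_1,\ldots,x_s)$. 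By the Evertse--Schlickewei--Schmidt bound, each non-degenerate solution (no vanishing subsum) determines $(x_1,\ldots,x_s)$ up to $O(1)$ choices, uniformly in the coefficients. So the task reduces to counting, for each partition of the terms into minimal vanishing subsums, how many free parameters $x_i$ remain. This is the same combinatorial scheme as in the proof of Theorem~\ref{thm:SystSubgrGen}. The aim is to show that each ``block'' of variables fixed by a vanishing subsum contains at least $3$ variables per free parameter, with one extra variable lost when $(a_{1,0},a_{2,0})\ne(0,0)$.

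The key steps run in this order. First, fix a partition $\cP$ of the terms of both equations into vanishing subsums; the partitions of the two equations are handled jointly. Second, form a graph on $\{1,\ldots,s\}$ joining $i,i'$ whenever terms indexed by $i$ and $i'$ lie in a common vanishing subsum. Within a minimal vanishing subsum, ratios of terms take $O(1)$ values. Using Assumption~\ref{asmp:onalphas}, a ratio relation $\alpha_{\ell}^{x_i}/\alpha_{\ell'}^{x_{i'}}=c$ with $\ell\neq \ell'$ (or $\alpha_{\ell_1}\alpha_{\ell_2}$ versus $\alpha_{\ell_3}\alpha_{\ell_4}$ with $\{\ell_1,\ell_2\}\neq\{\ell_3,\ell_4\}$) has $O(1)$ solutions in $(x_i,x_{i'})$ by multiplicative independence, or $O(H)$ solutions but along a line. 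Relations with the same root force $x_i=x_{i'}+O(1)$. Hence each connected component essentially contributes at most $H$ choices, and we get $N_{\vec A}(H)\ll H^{c}$, where $c$ is the number of components not attached to the constant term. Third, show every such component has size at least $3$. A singleton $\{i\}$ would force $\sum_\ell\beta_\ell\alpha_\ell^{x_i}$-subsums to vanish on their own. Non-degeneracy of $\vu$ and $a_{1,i}\ne0$ then give $u(x_i)=0$-type relations with only $O(1)$ solutions, so singletons contribute no power of $H$. A pair $\{i_1,i_2\}$ with $x_{i_1}=x_{i_2}$ essentially forces $(a_{1,i_1}+a_{1,i_2})u(x)=0$ and $(a_{2,i_1}+a_{2,i_2})u(x)^2=0$. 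Condition~\eqref{eqn:a2conds d>2} rules this out up to $O(1)$ values of $x$. Fourth, the constant-term component must contain at least one variable, and that variable gains no free parameter; this gives $\lfloor (s-1)/3\rfloor$.

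The main obstacle is the pair case with $x_{i_1}\ne x_{i_2}$. Here the vanishing subsums may mix terms $\alpha_\ell^{x_{i_1}}$ with $\alpha_{\ell'}^{x_{i_2}}$ and cross terms $(\alpha_{\ell_1}\alpha_{\ell_2})^{x_{i_1}}$, and we must show this leaves only $O(1)$ (not $O(H)$) solutions in $(x_{i_1},x_{i_2})$. To handle it, I would first use the linear equation. A vanishing subsum linking $\alpha_\ell^{x_{i_1}}$ and $\alpha_{\ell'}^{x_{i_2}}$ with $\ell\ne\ell'$ already gives $O(1)$ by independence. If instead only $\ell=\ell'$ links occur with $x_{i_1}-x_{i_2}=t$ fixed, I would insert this into the quadratic equation and invoke the condition that $\alpha_{\ell_1}\alpha_{\ell_2}/(\alpha_{\ell_3}\alpha_{\ell_4})$ is not a root of unity. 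This shows the quadratic subsums can vanish identically in $x$ only if the cross terms cancel in a way forcing $a_{2,i_1}+a_{2,i_2}=0$ and $a_{1,i_1}+a_{1,i_2}=0$ (with $t=0$), contradicting~\eqref{eqn:a2conds d>2}. Once this is settled, the count $H^{\lfloor s/3\rfloor}$, respectively $H^{\lfloor (s-1)/3\rfloor}$, follows by summing over the $O(1)$ partitions.
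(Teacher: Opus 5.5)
Your argument is correct in substance, but it is organised differently from the paper's.

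\textbf{The paper's route.} The paper builds its graph from the minimal vanishing subsums of the \emph{linear} equation only:
\begin{itemize}
\item Lemma~\ref{lem:x1vsx2} gives blocks with bounded differences $x_i-x_{i'}$.
\item Variables confined to a finite set are removed by induction on $s$, which also handles the inhomogeneous case.
\item Two-element blocks with $a_{1,i}+a_{1,i^\star}\ne0$ are disposed of by the case $s=1$ of Theorem~\ref{thm:anycombination}.
\item Only then is the quadratic equation, rewritten in the $h$ block representatives, fed into Theorem~\ref{thm:anycombination} with the roots $\alpha_{j_1}\alpha_{j_2}$. The saving $\lceil\ell/2\rceil$ with $\ell\ge k$, combined with $h\le\lfloor (s+k)/3\rfloor$, gives the exponent.
\end{itemize}

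\textbf{Your route.} You decompose both equations at once, with the constants as an extra vertex, and count components of the joint graph. Each edge recovers $x_{i'}$ from $x_i$ up to $O(1)$ choices, so a component costs at most one factor $H$. Components of size one or two, and the component of the constant, cost $O(1)$. The exponent is therefore at most $\lfloor s/3\rfloor$, resp.\ $\lfloor (s-1)/3\rfloor$.

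\textbf{What each buys.} Your route avoids the induction and the black-box use of Theorem~\ref{thm:anycombination}. Linear two-element blocks that the quadratic equation merges with other blocks need no separate treatment; in the paper this is the role of $\ell\ge k$. It also mirrors the proof of Theorem~\ref{thm:SystSubgrGen}, so it should extend more easily to more equations. With the pair case handled as below, your argument uses only the pairwise multiplicative independence of the $\alpha_j$, which already implies that no $\alpha_{j_1}\alpha_{j_2}$ is a root of unity. The paper, by contrast, needs the condition on the ratios $\alpha_{j_1}\alpha_{j_2}/(\alpha_{j_3}\alpha_{j_4})$ to make the recurrence with roots $\alpha_{j_1}\alpha_{j_2}$ non-degenerate. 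The paper's route, in exchange, reuses Theorem~\ref{thm:anycombination} instead of setting up the joint decomposition.

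\textbf{Statements that need correcting.}
\begin{itemize}
\item A non-degenerate solution of a homogeneous unit equation is determined only up to a common factor, not up to $O(1)$. What you actually use is that ratios within a minimal vanishing subsum take $O(1)$ values (Lemma~\ref{lem:non-deg_zero-sum}).
\item Assumption~\ref{asmp:onalphas} does not make $\alpha_{\ell_1}\alpha_{\ell_2}$ and $\alpha_{\ell_3}\alpha_{\ell_4}$ multiplicatively independent; one can have $\alpha_1\alpha_2=(\alpha_3\alpha_4)^2$. You only need that $\alpha_{\ell_3}\alpha_{\ell_4}$ is not a root of unity.
\item In your pair-case sketch, the step for $\ell=\ell'$ links cannot go through the quadratic relation alone. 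That relation is vacuous when $a_{2,i_1}=a_{2,i_2}=0$, and then it says nothing about $t$.
\end{itemize}

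\textbf{A clean pair-case argument.}
\begin{itemize}
\item If $i_1,i_2$ share no linear edge, both are isolated in the linear graph, so each $x_{i_\nu}$ lies in a finite set.
\item A linear edge with $\ell\ne\ell'$ fixes $(x_{i_1},x_{i_2})$ by independence.
\item Otherwise $x_{i_2}=x_{i_1}+t$ with $t$ in a finite set. For $t\ne0$, the linear relation $\sum_\ell\beta_\ell(a_{1,i_1}+a_{1,i_2}\alpha_\ell^t)\alpha_\ell^{x_{i_1}}=0$ has a nonzero coefficient, because $a_{1,i_2}\ne0$ and $\alpha_1^t\ne\alpha_2^t$.
\item For $t=0$, the hypothesis on $\vec a_1,\vec a_2$ turns $(a_{1,i_1}+a_{1,i_2})u(x)=0$ or $(a_{2,i_1}+a_{2,i_2})u(x)^2=0$ into $u(x)=0$.
\end{itemize}
In every case you need a zero count that is uniform in the coefficients, via Lemma~\ref{lem:non-deg} or Evertse--Schlickewei--Schmidt, not just Skolem--Mahler--Lech.
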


If $\alpha_1,\ldots,\alpha_d\in \overline{\Q}^{\ast}$ are distinct roots of some polynomial $f(T)\in \Q[T]$ that is irreducible over $\Q$, where  $\overline{\Q}$ is the algebraic closure of $\Q$, then the multiplicative independence condition in Assumption~\ref{asmp:onalphas} may be replaced by the following weaker condition:
\begin{equation}
\label{eq:assump}
\alpha_{j_1}\alpha_{j_2}
\quad\text{is not a root of unity for any }1\le j_1,j_2\le d.
\end{equation}

Indeed, let $h$ denote the absolute logarithmic height, see~\cite[Chapter~3]{Lang}. Since the roots $\alpha_1,\ldots, \alpha_d$ are conjugates, we have $h(\alpha_1)=\cdots=h(\alpha_d)$. Moreover since $\alpha_1$ is not a root of unity, we also have $h(\alpha_1)>0$, see~\cite[Proposition~1.1]{Lang}. Now, suppose, $\alpha_{j_1}^{e_1}=\alpha_{j_2}^{e_2}$, for some integers $1\le j_1\ne j_2\le d$ and some integers $e_1,e_2$ not both zero. Then applying $h$, we get $|e_1|=|e_2|$; see the properties mentioned in~\cite[Chapter~3, Section~1]{Lang}. 
Therefore either $\(\alpha_{j_1}/\alpha_{j_2}\)^{e_1}=1$ or $(\alpha_{j_1}\alpha_{j_2})^{e_1}=1$. The first possibility is ruled out by the fact that $\vu$ is non-degenerate one does not need the second condition in Assumption~\ref{asmp:onalphas}, while the second is ruled out by the assumption~\eqref{eq:assump}.

\begin{rem}
Certainly one can combine the arguments in the proofs of Theorem~\ref{thm:SystSubgrGen} and Theorem~\ref{thm:powersumwithLRS} to prove a similar estimate for diagonal systems of polynomial equations 
\begin{align*}
&a_{j,1}u(x_1)^j +\ldots+a_{j,s}u(x_s)^j = a_{j,0}, \qquad j =1, \ldots, k,\\
& \qquad  \qquad  \qquad 1\le x_1,\ldots, x_s  \le H,
\end{align*}
for any arbitrary $k\ge 2$, at least under the non-degeneracy condition on $\vec{A}$ and assuming that $\alpha_1,\ldots, \alpha_d$ are multiplicatively independent. Moreover under the same conditions, one can also ask about the {\it paucity\/} of solutions to the system of equations
\begin{equation*}
\begin{split}
&u(x_1)^j +\ldots+u(x_s)^j = u(x_{s+1})^j +\ldots+u(x_{2s})^j,  \qquad j =1, \ldots, k,\\
& \qquad  \qquad  1\le x_1,\ldots, x_s, x_{s+1},\ldots, x_{2s}  \le H, 
\end{split}
\end{equation*}
with the obvious restriction 
\[
x_1,\ldots, x_s \quad \text{is not a permutation of} \quad  x_{s+1},\ldots, x_{2s}.
\]
\end{rem}

\subsection{Notation} 
We recall the following convention: the notations  $U \ll V$ and $U = O(V)$, are equivalent to $|U|  \le c V$ for some constant $c>0$,
which throughout the paper may depend on $s$, the rank of $\Gamma$ and $d$.  In particular, all our estimates are uniform with respect to the coefficients of the 
diagonal systems of polynomial equations that we consider.

Also, as usual, $\Q$ and $\Z$ denote the sets of rational numbers and integers, respectively. 

\section{Preliminaries}
\subsection{Linear equations in elements of multiplicative subgroups of finite rank}\label{sec:exponentials}
Let $n \ge 1$ be an integer, and $\Gamma$ a subgroup  of $(K^\ast)^n$ of  finite rank $\varrho$. For any
$\gamma_1,\dots,\gamma_n\in K^\ast$, we say that a solution to the equation
\begin{equation}\label{eq:subspace}
\gamma_1g_1+\ldots+\gamma_ng_n =1,
\qquad (g_1,\dots,g_n)\in \Gamma,
\end{equation}
is non-degenerate, if
\[
\sum_{i\in \cI} \gamma_i g_i \neq 0
\]
for every nonempty subset $\cI\subseteq \{1,2,\dots,n\}$, or in other words, if no subsum in the  left-hand side of~\eqref{eq:subspace} vanishes. Then, we have the following estimate due to Amoroso and Viada~\cite[Theorem~6.2]{AV09}.

\begin{lem}\label{lem:non-deg}
For any $\gamma_1,\dots,\gamma_n\in K^\ast$, the number of non-degenerate solutions to~\eqref{eq:subspace} is at most $(8n)^{4n^4(n+\varrho+1)}$.
\end{lem}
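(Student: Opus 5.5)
This is \cite[Theorem~6.2]{AV09}, and in the paper we simply quote it; the plan below only indicates how one would reconstruct it. We follow the now-standard route of Evertse, Schlickewei and Schmidt, with Amoroso--Viada's sharper counting of small points substituted in the key step.

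\emph{Reductions.} First we reduce to the case where $K$ is replaced by a finitely generated field $\Q(\Gamma,\gamma_1,\ldots,\gamma_n)$. A specialisation argument then lets us assume that all coordinates lie in $\overline{\Q}$; this step preserves both the rank $\varrho$ and non-degeneracy. Next, non-degenerate solutions of $\gamma_1g_1+\ldots+\gamma_ng_n=1$ correspond to points of the linear variety
\[
X=\{x_1+\ldots+x_n=1\}
\]
lying in the coset $\gamma\Gamma$ of a group of rank $\varrho$, and avoiding every subvariety cut out by a vanishing subsum.

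\emph{Key step: large versus small height.} We split the points of $X\cap\gamma\Gamma$ according to their height.
\begin{enumerate}
\item For points of large height, a quantitative Subspace Theorem (the version of Evertse--Schlickewei) shows that they lie in a bounded number of proper linear subspaces. The bound depends only on $n$ and $\varrho$, via a gap principle applied to an $(n+\varrho)$-dimensional parametrisation of $\Gamma$.
\item For points of small height (height below a fixed fraction of $h(\gamma)$ plus a constant), we use Amoroso--Viada's effective Bogomolov/Zhang-type estimate. It says that small points of $X\cap\gamma\Gamma$ outside the union of torsion cosets contained in $X$ lie in a union of at most $(8n)^{O(n^3)}$ proper subvarieties of explicitly bounded degree.
\end{enumerate}
In both cases the points are pushed into proper subspaces. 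Intersecting with the original equation yields a shorter equation in fewer variables, in which the non-degeneracy of the original solution forces non-degeneracy of the induced one.

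\emph{Induction and the main obstacle.} We close the argument by induction on $n$, multiplying the counts at each level. The resulting recurrence has the shape
\[
A(n)\le C(n,\varrho)\,A(n-1),
\]
where $C(n,\varrho)$ collects the subspace counts from the two cases above. This recurrence produces the exponent $4n^4(n+\varrho+1)$. The main obstacle is the uniformity in $\gamma$ in the small-height case: the small-points bound must not depend on the height of $\gamma$. Amoroso--Viada handle this by using the full strength of their effective Bogomolov bound. Since all of this is carried out in \cite{AV09}, we invoke the result directly.
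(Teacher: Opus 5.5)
Your proposal is correct and agrees with the paper, which gives no proof here and simply quotes the bound from \cite[Theorem~6.2]{AV09}. Your reconstruction sketch is not needed and should not be offered as the argument of \cite{AV09}: the recurrence you say produces the exponent $4n^4(n+\varrho+1)$ is asserted rather than derived, and the claim that non-degeneracy passes to the induced equation is false in general, since $(2,-4,7,-13,9)$ is a non-degenerate solution of $x_1+\ldots+x_5=1$ lying on $x_5=x_1+x_3$, yet eliminating $x_5$ gives $2x_1+x_2+2x_3+x_4=1$, in which the subsum $2x_1+x_2$ vanishes.
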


Below, we record the following simple consequence, which is used several times in the sequel.

\begin{lem}\label{lem:non-deg_zero-sum}
For any $\gvec=(\gamma_1,\dots,\gamma_n)\in (K^\ast)^n$, there exists a subset
\[
\mathcal C_{\Gamma,\gvec}\subseteq(K^\ast)^n \quad \mathrm{with}\quad
\sharp\, \mathcal C_{\Gamma,\gvec}\le (8n-8)^{4(n-1)^4(n+\varrho)},
\]
such that every non-degenerate solution, to the equation
\[
\gamma_1g_1+\ldots+\gamma_ng_n =0,
\qquad (g_1,\dots,g_n)\in \Gamma,
\]
can be written as
\[
(g_1,g_2,\ldots,g_n)=(g_1,g_1c_2,\ldots,g_1c_n),
\]
for some $(1,c_2,\ldots,c_n)\in \mathcal C_{\Gamma,\gvec}$.
\end{lem}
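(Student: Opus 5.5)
We derive Lemma~\ref{lem:non-deg_zero-sum} from Lemma~\ref{lem:non-deg} by dividing by one term. The case $n=1$ is vacuous, since $\gamma_1 g_1=0$ has no solutions with $g_1\in K^\ast$; so assume $n\ge 2$. Let $(g_1,\dots,g_n)\in\Gamma$ be a non-degenerate solution of $\gamma_1g_1+\ldots+\gamma_ng_n=0$. Since $\gamma_1g_1\neq 0$, divide through by $-\gamma_1 g_1$ to get
\[
\sum_{i=2}^{n} \(-\frac{\gamma_i}{\gamma_1}\)\frac{g_i}{g_1} = 1 .
\]

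Consider the projected group
\[
\Gamma'=\{(h_2/h_1,\ldots,h_n/h_1):~(h_1,\ldots,h_n)\in\Gamma\}\subseteq (K^\ast)^{n-1}.
\]
It is a homomorphic image of $\Gamma$, so its rank is at most $\varrho$. The tuple $(g_2/g_1,\dots,g_n/g_1)$ lies in $\Gamma'$ and solves an equation of the form~\eqref{eq:subspace} in $n-1$ variables, with coefficients $\gamma_i'=-\gamma_i/\gamma_1\in K^\ast$.

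This new solution is non-degenerate. Suppose $\sum_{i\in\cJ}\gamma_i' g_i/g_1=0$ for some nonempty $\cJ\subseteq\{2,\ldots,n\}$. Multiplying by $-\gamma_1g_1$ gives $\sum_{i\in\cJ}\gamma_ig_i=0$, a vanishing subsum of the original equation, which is a contradiction.

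Now apply Lemma~\ref{lem:non-deg} with $n-1$ in place of $n$ and rank at most $\varrho$. The number of possible tuples $(c_2,\ldots,c_n)=(g_2/g_1,\ldots,g_n/g_1)$ is at most
\[
(8(n-1))^{4(n-1)^4((n-1)+\varrho+1)}=(8n-8)^{4(n-1)^4(n+\varrho)} .
\]
We use that the bound in Lemma~\ref{lem:non-deg} is monotone in the rank, which is clear from its form. Let $\mathcal C_{\Gamma,\gvec}$ be the set of the corresponding vectors $(1,c_2,\ldots,c_n)$. Then every non-degenerate solution has the form $(g_1,g_1c_2,\ldots,g_1c_n)$, as claimed.

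The only points needing care are that $\Gamma'$ has rank at most $\varrho$, and that the resulting exponent matches exactly. Both are immediate from the computation above.
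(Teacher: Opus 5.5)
Your proof is correct and is essentially the paper's argument: divide by $-\gamma_1 g_1$, pass to the ratio group of rank at most $\varrho$ in $(K^\ast)^{n-1}$, and apply Lemma~\ref{lem:non-deg} with $n-1$ variables to get exactly $(8n-8)^{4(n-1)^4(n+\varrho)}$. Your explicit checks (non-degeneracy transfers to the new equation, $n=1$ is vacuous, the bound is monotone in the rank) fill in details the paper leaves implicit.
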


\begin{proof}
Let 
\[
\widetilde \Gamma =\left\{\left(\frac{g_2}{g_1},\ldots,\frac{g_n}{g_1}\right):
(g_1,\ldots,g_n)\in\Gamma\right\}.
\] Note that $\widetilde \Gamma$ is a subgroup of $(K^\ast)^{n-1}$ with rank at most $\varrho$. Thus, dividing
\[
\gamma_1g_1+\ldots+\gamma_ng_n=0
\]
by $-\gamma_1g_1$, we see that $(g_1,\ldots,g_n)$ is a non-degenerate solution to the above equation if and only if $\(\frac{g_2}{g_1},\ldots, \frac{g_n}{g_1}\)$ is a non-degenerate solution to the equation
\[
-\frac{\gamma_2}{\gamma_1}\widetilde g_2
-\ldots-
\frac{\gamma_n}{\gamma_1}\widetilde g_n=1,\quad (\widetilde g_2,\ldots, \widetilde g_n)\in \widetilde \Gamma.
\]

By Lemma~\ref{lem:non-deg}, the later equation has at most $(8n-8)^{4(n-1)^4(n+\varrho)}$ solutions $(\widetilde g_2,\ldots, \widetilde g_n)$. The proof concludes by setting $\mathcal C_{\Gamma,\gvec}$ to be the set of such corresponding vectors $(1,\widetilde g_2,\ldots, \widetilde g_n)$.
\end{proof}

We also need the following result counting arbitrary solutions, which is essentially~\cite[Corollary~16]{BGKS} (see also~\cite[Lemma~3.2]{ManOstShp}).

\begin{lem} 
\label{lem:LinEq homog}
Let  $\cW \subseteq \Gamma$ be a finite subset of cardinality $W$.  For any  $\gamma_1, \ldots, \gamma_n \in \K^\ast$, the number of
  solutions to
  \[
    \gamma_1 g_1 + \ldots + \gamma_n g_n = 0,\quad g_1, \ldots, g_n \in \cW,
  \]
  is $O\(W^{\fl{n/2}} \)$. 
\end{lem}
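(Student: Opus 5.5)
We will argue by induction on $n$, partitioning the solutions according to which subsums vanish. Take a solution $(g_1,\ldots,g_n)\in\cW^n$. Its index set $\{1,\ldots,n\}$ can be split into disjoint nonempty blocks $\cI_1,\ldots,\cI_r$ such that, for each $\nu$, the subsum $\sum_{i\in\cI_\nu}\gamma_i g_i$ vanishes and no proper nonempty subsum of it vanishes. To build such a partition, pick a minimal nonempty index set with vanishing subsum and remove it; what remains is again a vanishing sum, so we can repeat. There are at most $O_n(1)$ set partitions of $\{1,\ldots,n\}$, so it suffices to bound the number of solutions attached to a fixed partition $\cI_1,\ldots,\cI_r$.

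A block cannot be a singleton, since $\gamma_i g_i\ne 0$. Hence $\sharp\,\cI_\nu\ge 2$ for every $\nu$, and so $r\le \fl{n/2}$.

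Now fix a block $\cI=\{i_1,\ldots,i_t\}$ with $t\ge 2$. The tuple $(g_{i_1},\ldots,g_{i_t})$ is a non-degenerate solution of $\sum_{i\in\cI}\gamma_i g_i=0$ with entries in $\Gamma$. We apply Lemma~\ref{lem:non-deg_zero-sum} with the group $\Gamma^t\subseteq (K^\ast)^t$, whose rank is at most $t\varrho$; this is still bounded in terms of $n$ and $\varrho$. It gives $g_{i_j}=g_{i_1}c_j$ with $(1,c_2,\ldots,c_t)$ ranging over a set of size $O(1)$. So once we choose $g_{i_1}\in\cW$ (at most $W$ ways) and one of $O(1)$ coefficient vectors, the whole block is determined. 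The blocks are independent of each other, so a fixed partition contributes $O(W^r)\le O(W^{\fl{n/2}})$ solutions. Summing over the $O(1)$ partitions gives the lemma.

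The step needing care is the rank used in Lemma~\ref{lem:non-deg_zero-sum}. The entries $g_i$ all lie in the same $\Gamma\subseteq K^\ast$, so we work with the diagonal-type group $\Gamma^t$, or more simply the group of ratios $g_{i_j}/g_{i_1}\in\Gamma$, whose rank is still $O(\varrho)$. Because of this, the constant depends only on $n$ and $\varrho$ and not on the $\gamma_i$. The rest is bookkeeping, and no induction is actually needed beyond the greedy decomposition into minimal vanishing subsums.
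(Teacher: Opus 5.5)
Your proof is correct. It is essentially the standard argument behind \cite[Corollary~16]{BGKS}, which the paper cites instead of proving the lemma, and which the paper itself reuses in the base case $k=1$ of Section~\ref{sec:hom case}: you split into minimal vanishing subsums (each of size at least two since $\gamma_i g_i\ne 0$), then use Lemma~\ref{lem:non-deg_zero-sum} to pin down each block up to $O(1)$ choices once one of its entries is fixed, applying it, as you rightly do, to $\Gamma^t\subseteq (K^\ast)^t$ of rank $t\varrho$.
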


In the following sections, we use the above results with $n$ bounded in terms of $s$ and $d$.

\subsection{Ratios of powers in finite sets} 

\begin{lem}\label{lem:x1vsx2}
Let $d\ge 2$ and $\alpha_1,\ldots,\alpha_d\in K^{\ast}$ be such that any two 
\[\alpha_{j_1},\alpha_{j_2},\qquad 1\le j_1< j_2\le d,\]
are multiplicatively independent. Let $\cC \subseteq K^{\ast}$ be a finite set. Then there exists a set $\cZ_{\cC}$ of integers of cardinality at most $d^2\sharp\, \cC$, such that for any integers $x_1,x_2\ge 1$ satisfying
\[
\alpha_{j_1}^{x_1}/\alpha_{j_2}^{x_2}\in \cC\quad \mathrm{for~some}\quad 1\le j_1,j_2\le d,
\]
we have 
\[
x_1-x_2\in \cZ_{\cC}.
\] 
\end{lem}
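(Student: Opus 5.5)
For each fixed pair of indices $(j_1,j_2)$ and each fixed $c\in\cC$, I will show that all solutions $(x_1,x_2)$ of
\[
\alpha_{j_1}^{x_1}/\alpha_{j_2}^{x_2}=c
\]
give the same value of $x_1-x_2$. The set $\cZ_{\cC}$ is then the collection of these values over all $d^2$ pairs $(j_1,j_2)$ and all $c\in\cC$, skipping any pair and $c$ with no solution. This gives $\sharp\,\cZ_{\cC}\le d^2\sharp\,\cC$ at once.

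First I note that no $\alpha_j$ is a root of unity. Since $d\ge 2$, each $\alpha_j$ has some partner $\alpha_{j'}$ with $j'\ne j$, and the pair is multiplicatively independent. If $\alpha_j^{e}=1$ with $e\ne 0$, then $\alpha_j^{e}\alpha_{j'}^{0}=1$ would be a nontrivial relation, contradicting this independence.

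\emph{Case $j_1=j_2=j$.} Here the equation reads $\alpha_j^{x_1-x_2}=c$. Suppose two solutions give differences $t$ and $t'$. Then $\alpha_j^{t-t'}=1$, and since $\alpha_j$ is not a root of unity, $t=t'$. So the difference $x_1-x_2$ is uniquely determined by $c$.

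\emph{Case $j_1\ne j_2$.} Suppose $(x_1,x_2)$ and $(y_1,y_2)$ are both solutions. Dividing the two equations gives
\[
\alpha_{j_1}^{x_1-y_1}\alpha_{j_2}^{-(x_2-y_2)}=1.
\]
Multiplicative independence of $\alpha_{j_1},\alpha_{j_2}$ forces $x_1=y_1$ and $x_2=y_2$. So there is at most one solution at all, and hence at most one difference.

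The argument is elementary and I do not expect a real obstacle. The only point needing care is the diagonal case $j_1=j_2$. That case uses the observation above that independence of distinct pairs, together with $d\ge 2$, rules out roots of unity; it is also why the hypothesis $d\ge2$ is needed.
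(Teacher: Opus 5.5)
Your proof is correct and follows the paper's argument: you fix one of the $d^2\sharp\,\cC$ triples $(c,j_1,j_2)$, compare two solutions, and use multiplicative independence to get a unique pair $(x_1,x_2)$ when $j_1\ne j_2$ and a unique difference $x_1-x_2$ when $j_1=j_2$. Your remark that pairwise independence together with $d\ge 2$ rules out any $\alpha_j$ being a root of unity supplies the step the paper leaves implicit in the diagonal case.
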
 

\begin{proof}
Fix some $c\in\cC$ and $1\le j_1,j_2\le d$. We obviously have $d^2\sharp\, \cC$ choices
for $(c,j_1,j_2)$.

Suppose there is a solution to
\[
\alpha_{j_1}^{x_1}/\alpha_{j_2}^{x_2}=c,
\]
for some integers $x_1,x_2\ge 1$. If we also have $\alpha_{j_1}^{y_1}/\alpha_{j_2}^{y_2}=c$ for some integers $y_1,y_2\ge 1$, then we get
\[
\alpha_{j_1}^{x_1-y_1}\alpha_{j_2}^{y_2-x_2}=1.\]

If $j_1\ne j_2$, then we have $x_1=y_1$ and $x_2=y_2$. If instead $j_1=j_2$, then $x_1-x_2=y_1-y_2$. This gives the desired conclusion.
\end{proof}

\begin{rem}\label{rem:weakermultindepcond}
In fact, if we have $\alpha_{j_1}^{e_1}=\alpha_{j_2}^{e_2}$ for some positive integers $e_1 \ne e_2$ and some $1\le j_1,j_2\le d$, then the conclusion of Lemma~\ref{lem:x1vsx2} does not hold. Indeed, $\alpha_{j_1}^{x_1}/\alpha_{j_2}^{x_2}=c$ implies $\alpha_{j_1}^{x_1+xe_1}/\alpha_{j_2}^{x_2+xe_2}=c$ for any integer $x\ge 1$.
\end{rem}

\subsection{System of equations with power sums}
We record the following immediate consequence of Newton's identities (connecting elementary symmetric polynomials to power sums), which also underlies the argument of Mordell~\cite[Section~2]{Mord}. 

\begin{lem}\label{lem:VMVTsmalls} Let $s\ge 1$ be an integer. Then for any solution 
\[(c_1,\ldots, c_{s}, c_{s+1},\ldots, c_{2s})\in K^{2s}\] to the system of equations
\[c_1^j +\ldots+c_s^j = c_{s+1}^j +\ldots+c_{2s}^j,  \quad j =1, \ldots s,\]
we have
\[
c_1,\ldots, c_s \quad \text{is a permutation of} \quad c_{s+1},\ldots, c_{2s}.\]
\end{lem}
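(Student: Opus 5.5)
The plan is to pass from power sums to elementary symmetric polynomials through Newton's identities, and then compare the two monic polynomials whose roots are the $c_i$. Set
\[
p_j = c_1^j+\ldots+c_s^j, \qquad q_j = c_{s+1}^j+\ldots+c_{2s}^j, \qquad j=1,\ldots,s,
\]
and let $e_j$, $f_j$ be the $j$-th elementary symmetric polynomials in $c_1,\ldots,c_s$ and in $c_{s+1},\ldots,c_{2s}$, respectively, with $e_0=f_0=1$. The hypothesis says exactly that $p_j=q_j$ for $1\le j\le s$.

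Newton's identities give, for each $1\le j\le s$,
\[
j e_j = \sum_{i=1}^{j} (-1)^{i-1} e_{j-i}\, p_i ,
\]
and the identical relation holds with $e$ and $p$ replaced by $f$ and $q$. Because $K$ has characteristic zero, $j$ is invertible in $K$, so this expresses $e_j$ as a polynomial in $p_1,\ldots,p_j$ and $e_1,\ldots,e_{j-1}$. An induction on $j$ then shows $e_j=f_j$ for all $1\le j\le s$: the base case is $e_1=p_1=q_1=f_1$, and at step $j$ the right-hand sides agree because $p_i=q_i$ and the lower elementary symmetric polynomials already agree.

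It follows that
\[
\prod_{i=1}^{s}(T-c_i)=\sum_{j=0}^{s}(-1)^j e_j T^{s-j}=\sum_{j=0}^{s}(-1)^j f_j T^{s-j}=\prod_{i=s+1}^{2s}(T-c_i)
\]
in $K[T]$. Since $K[T]$ is a unique factorisation domain, the two factorisations into linear factors coincide with multiplicity. Hence the multisets $\{c_1,\ldots,c_s\}$ and $\{c_{s+1},\ldots,c_{2s}\}$ are equal, which means $c_1,\ldots,c_s$ is a permutation of $c_{s+1},\ldots,c_{2s}$.

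No step is a real obstacle. The only point that needs care is the division by $j$ in Newton's identities, and this is exactly where characteristic zero is used; in characteristic $p\le s$ the statement fails.
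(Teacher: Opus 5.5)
Your proof is correct and takes the same approach as the paper, which states the lemma as an immediate consequence of Newton's identities without writing out the details. You fill in exactly the intended steps: dividing by $j$ (using characteristic zero) to get $e_j=f_j$ inductively, and then comparing the two monic polynomials $\prod_{i\le s}(T-c_i)=\prod_{i>s}(T-c_i)$ in $K[T]$.
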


\section{Proof of Theorem~\ref{thm:SystSubgrGen}}

\subsection{The homogeneous case: $m =0$} 
\label{sec:hom case}
The case $1\le s\le k$ is immediate, since the imposed condition on $\vec{A}$ implies that $M_{\vec{A}}(\cW)=0$. Thus, it remains to consider the case $1\le k\leq s-1$.

We now prove by induction on $k$ that, there exists a finite subset $\cC_{\Gamma,\vec{a}_1,\ldots,\vec{a}_{k}}\subseteq K^{\ast}$, which is closed under taking inverse and containing $1$, 
 of cardinality $\sharp\, \cC_{\Gamma,\vec{a}_1,\ldots,\vec{a}_{k}}\ll 1$ (that is, bounded only in terms of $s$), 
satisfying the following: for any solution $(w_1,\ldots, w_s)$ to~\eqref{eq:GenEqnsSubgr}, there exists a partition
\[
\{1,\ldots,s\}=\bigsqcup_{\newr=1}^{h_k} \cI^{(k)}_{\newr},
\]
such that $\sharp\, \cI^{(k)}_{\newr}\ge k+1$ for each $r=1,\ldots, h_k$ and 
\[w_{i_1}/w_{i_2}\in \cC_{\Gamma,\vec{a}_1,\ldots,\vec{a}_k},\] for any $i_1,i_2\in \cI^{(k)}_{\newr}$ and any $r=1,\ldots,h_k$. The proof then follows immediately, as we have $h_k\le \lfloor s/(k+1) \rfloor$.

\subsubsection*{Basis of induction: $k=1$}\label{sec:k=1powsum}
We argue exactly as in the starting point in the proof of~\cite[Corollary~1.6]{BGKS}. In other words, taking $I=1$ we see that the imposed condition on $\vec{A}$ implies $a_{1,1},\ldots, a_{1,s}\in K^{\ast}$. Then we divide the equation in~\eqref{eq:GenEqnsSubgr} into non-degenerate vanishing sub-sums. Since we have
\[a_{1,1}w_1,\ldots, a_{1,s}w_s\in K^{\ast},\]
clearly each such non-degenerate vanishing sub-sum has at least two terms. Then the proof for the case $k=1$ follows from Lemma~\ref{lem:LinEq homog}. 

\subsubsection*{Inductive step: $k\ge 2$}\label{sec:k ge 2 fgm} 
Let $(w_1,\ldots, w_s)$ be a solution to~\eqref{eq:GenEqnsSubgr}.

Assume that the inductive statement holds for $k-1$, that is,  there exists a partition 
\[
\{1,\ldots,s\}=\bigsqcup_{\newr=1}^{h_{k-1}} \cI^{(k-1)}_{\newr},
\]
such that $\sharp\, \cI^{(k-1)}_{\newr}\ge k$ for each $r=1,\ldots, h_{k-1}$ and 
\begin{equation}
\label{eq:w12}
w_{i_1}/w_{i_2}\in \cC_{\Gamma,\vec{a}_1,\ldots,\vec{a}_{k-1}},
\end{equation} 
for any $i_1,i_2\in \cI^{(k-1)}_{\newr}$ and any $r=1,\ldots,h_{k-1}$.
Fix some index $i_r\in \cI^{(k-1)}_\newr$ for each $r=1,\ldots,h_{k-1}$. Then for any $i\in \cI^{(k-1)}_\newr$ and any $1\le r\le h_{k-1}$, one can write $w_i=c_{i,r} w_{i_\newr}$ for some $c_{i,r}\in \cC_{\Gamma,\vec{a}_1,\ldots,\vec{a}_{k-1}}$. In particular, we have
\begin{equation}
\label{eq:jthsum}
\sum_{i=1}^s a_{j,i}w_i^j=\sum_{r=1}^{h_{k-1}}w_{i_r}^j \sum_{i\in \cI^{(k-1)}_\newr} a_{j,i} c_{i,r}^j =0,\qquad j=1,\ldots, k. 
\end{equation}

Now note that, if the partitions satisfy $\sharp\, \cI^{(k-1)}_{\newr}\ge k+1$ for each $r=1,\ldots, h_{k-1}$, then the proof concludes. Therefore,  since $\sharp\,\cI^{(k-1)}_{\newr}\ge k$, $r=1,\ldots,h_{k-1}$, we may assume that $\sharp\, \cI^{(k-1)}_{\newr}=k$ for some $1\le r\le h_{k-1}$. Let us denote 
\[\cK=\{1\le r \le h_{k-1}:~\sharp\, \cI^{(k-1)}_{\newr}=k\}.\]

Now we take some $\newr_0 \in \cK$. Due to the assumption~\eqref{eq:NonDegen A} on $\vec{A}$, there exists some $1\le j\le k$ (depending on $\vec{A}$, $\cI^{(k-1)}_{r_0}$ and $\{c_{i,r_0}:~i\in \cI^{(k-1)}_{r_0}\}$),  such that
\begin{equation}\label{eq:non-vanishingr0}
\sum_{i\in \cI^{(k-1)}_{\newr_0}} a_{j,i}c_{i,r_0}^{j} \ne 0.
\end{equation} 
Then we decompose the sum in~\eqref{eq:jthsum}, corresponding to $j$ as in~\eqref{eq:non-vanishingr0}, 
into non-degenerate vanishing sub-sums.  Therefore, there is a set $\cK_0 \subseteq \{1,\ldots, h_{k-1}\}$ with 
$r_0 \in \cK_0\cap \cK$  and such that 
\begin{equation}
\label{eq:eqs K0}
\sum_{r \in \cK_0} w_{i_r}^j \sum_{i\in \cI^{(k-1)}_\newr} a_{j,i} c_{i,r}^j =0
\mand \sum_{r \in \cL} w_{i_r}^j \sum_{i\in \cI^{(k-1)}_\newr} a_{j,i} c_{i,r}^j \ne 0
\end{equation}
for any nonempty proper subset $\cL \subseteq \cK_0$.

In particular, by~\eqref{eq:non-vanishingr0} and~\eqref{eq:eqs K0},  there exists some integer 
$\rho(\newr_0)\in \cK_0 \setminus \{\newr_0\}$ such that 
\[\sum_{i\in \cI^{(k-1)}_{\rho(\newr_0)}} a_{j,i}c_{i,\rho(\newr_0)}^{j} \ne 0.\]

We choose one such $\rho(\newr_0)$ for each $r_0\in \cK$. We note that $\rho(\newr_0)$
is not uniquely defined, and the other way around, the same value of $\rho(\newr_0)$ may correspond to 
different values of $r_0$.

Lemma~\ref{lem:non-deg_zero-sum} shows that for some positive integer  $j \le k \le s-1$ the powers $(w_{i_{\rho(r_0)}}/w_{i_{r_0}})^j$ take finitely many values, and, hence so do the ratios $w_{i_{\rho(r_0)}}/w_{i_{r_0}}$ themselves. Therefore, there exists a finite subset $\cC_{\Gamma,\vec{a}_1,\ldots,\vec{a}_{k}}\subseteq K^{\ast}$, which is also closed under taking inverse and  of cardinality $\sharp\, \cC_{\Gamma,\vec{a}_1,\ldots,\vec{a}_{k}} \ll 1$, containing $\cC_{\Gamma,\vec{a}_1,\ldots,\vec{a}_{k-1}}$, such that  
\begin{equation}
\label{eq:rho r0}
w_{i_{\rho(\newr_0)}}/w_{i_{\newr_0}}\in \cC_{\Gamma,\vec{a}_1,\ldots,\vec{a}_{k}}
\end{equation}
 for any $\newr_0\in \cK$. 

Then we denote
\[
\cJ^{(k)}_{\newr} =
\begin{cases}
\cI^{(k-1)}_{{\newr}}\cup \cI^{(k-1)}_{\rho(\newr)}, & \newr\in \cK,\\
\cI^{(k-1)}_{\newr}, & \newr\notin \cK.
\end{cases}\]

In particular, since for $ \newr\in \cK$ we have chosen $\rho(\newr) \ne \newr$  we now conclude that 
$\cI^{(k-1)}_{{\newr}}\cap \cI^{(k-1)}_{\rho(\newr)} = \varnothing$. 
Hence, 
 $\sharp\, \cJ^{(k)}_{\newr}\ge 2k\ge k+1$ for any $\newr\in \cK$ and also, by the definition of $\cK$,
 we have $\sharp\, \cJ^{(k)}_{\newr}\ge k+1$ for any $r\notin \cK$. Moreover, we can write
\begin{equation}\label{eq:union}
\{1,\ldots, s\}=\bigcup_{1\le r\le h_{k-1}} \cJ^{(k)}_{\newr},
\end{equation} 

By~\eqref{eq:w12} and~\eqref{eq:rho r0},  if $(w_1,\ldots, w_s)$ is a solution to~\eqref{eq:GenEqnsSubgr}, then we have 
\begin{equation}
\label{eq:w12 Jr}
w_{i_1}/w_{i_2}\in \cC_{\Gamma,\vec{a}_1,\ldots,\vec{a}_k},
\end{equation}
for any $i_1, i_2 \in \cJ^{(k)}_{\newr}$ and any $1\le r\le h_{k-1}$.

Next, we show how to modify the union~\eqref{eq:union} into the desired partition.

First of all, for any integer $n\ge 1$, let us denote the product-set 
\[
\cC_{\Gamma,\vec{a}_1,\ldots,\vec{a}_{k}}^n=\{c_1\ldots c_n: c_i\in \cC_{\Gamma,\vec{a}_1,\ldots,\vec{a}_{k}},\quad i=1,\ldots, n\}.
\]
Since $1\in \cC_{\Gamma,\vec{a}_1,\ldots,\vec{a}_{k}}$, note that
\[
\cC_{\Gamma,\vec{a}_1,\ldots,\vec{a}_{k}}^m \subseteq \cC_{\Gamma,\vec{a}_1,\ldots,\vec{a}_{k}}^n,\qquad  1\le m\le n.
\]

Let us now consider the graph $\cG$ with vertices $\{1,....,h_{k-1}\}$, where there is an edge between two vertex $r_1$ and $r_2$, if and only if, 
\[
\cJ^{(k)}_{r_1}\cap \cJ^{(k)}_{r_2} \neq \varnothing.
\]

Now, let $\cG_1,\ldots, \cG_{h_k}$ be the connected components of $\cG$, for some integer $1\le h_k\le h_{k-1}$. Then we denote
\[
\cI^{(k)}_{r^*}=\bigcup_{r\in \cG_{r^*}} \cJ^{(k)}_{r},\quad 1\le r^*\le h_{k}.
\]

It is clear that, 
\[
 \cI^{(k)}_{r_1^*}\cap \cI^{(k)}_{r_2^*}=\varnothing,\quad 
 \mathrm{for~any}\quad 1\le r^*_1\ne r^*_2\le h_k.
 \]
In particular, we get a partition
\[
\{1,\ldots,s\}=\bigsqcup_{r^*=1}^{h_k} \cI^{(k)}_{\newr^*}.
\]
Moreover, we also have
\[\cI^{(k)}_{r^*}\ge k+1,\quad r^*=1,\ldots, h_k.\]

Then (as we trivially have $h_{k-1}\le s$), it is enough to show that 
\[
w_{i_1}/w_{i_2}\in \cC_{\Gamma,\vec{a}_1,\ldots,\vec{a}_k}^{h_{k-1}},
\] 
for any $i_1, i_2 \in \cI^{(k)}_{r^*}$ and any $1\le r^*\le h_{k}$.

To prove the claim above, take some $1\le r^*\le h_{k}$ and some $i_1, i_2 \in \cI^{(k)}_{r^*}$. Let $r_1,r_2 \in \cG_{r^*}$ be the vertices, such that
\[
i_1\in \cJ^{(k)}_{r_1} \mand i_2\in \cJ^{(k)}_{r_2}.
\]
Since $\cG_{r^*}$ is connected and has at most $h_{k-1}$ vertices, there exists a simple (that is, without loops) path in $\cG_{r^*}$ of length at most $h_{k-1}-1$ from $r_1$ to $r_2$, where for each of at most $h_{k-1}-1$ steps, from $i^\circ \in \cJ^{(k)}_{u}$ to $i^\bullet \in \cJ^{(k)}_{v}$, we compute  
\[
\frac{w_{i^\circ}}{w_{^\bullet}}=\frac{w_{i^\circ}}{w_{i}}\cdot \frac{w_{i}}{w_{^\bullet}} 
\]
for some $i \in \cJ^{(k)}_{u}  \cap  \cJ^{(k)}_{v}$.  Using~\eqref{eq:w12 Jr}, we conclude the proof.

\subsection{The inhomogeneous case: $m \ge 1$} 
We now prove the desired estimate above by induction on $s$. The base case $s=1$ is trivial. Therefore, we assume that the inductive statement holds for $s-1$ and now prove the estimate for $s$.

Note that the case $s+1\le m$ is immediate, since the imposed condition on $\vec{A}$ implies that $M_{\vec{A}}(\cW)=0$. Thus, it remains to consider the case $1\le m\leq s$.

As we have shown in Section~\ref{sec:hom case}, from the first $m-1$ equations,  which form a homogeneous system, we obtain a finite subset $\cC_{\Gamma,\vec{a}_1,\ldots,\vec{a}_{m-1}}\subseteq K^{\ast}$ of cardinality $\sharp\, \cC_{\Gamma,\vec{a}_1,\ldots,\vec{a}_{m-1}}\ll 1$, satisfying the following: 
for any solution $(w_1,\ldots, w_s)$ to~\eqref{eq:GenEqnsSubgr}, there exists a partition
\[
\{1,\ldots,s\}=\bigsqcup_{\newr=1}^{h} \cI_{\newr},
\]
such that $\sharp\, \cI_{\newr}\ge m$ for each $r=1,\ldots, h$ and 
\[w_{i_1}/w_{i_2}\in \cC_{\Gamma,\vec{a}_1,\ldots,\vec{a}_{m-1}},\] for any $i_1,i_2\in \cI_{\newr}$ and any $r=1,\ldots,h$. 

Now let $(w_1,\ldots, w_s)$ be a solution to~\eqref{eq:GenEqnsSubgr} and fix some index $i_r\in \cI_\newr$ for each $r=1,\ldots, h$. Then for any $i\in \cI_\newr$ and any $1\le r\le h$, one can write $w_i=c_{i,r} w_{i_\newr}$ for some $c_{i,r}\in \cC_{\Gamma,\vec{a}_1,\ldots,\vec{a}_{m-1}}$. In particular from the $m$th equation, we have
\[
\sum_{i=1}^s a_{m,i}w_i^m=\sum_{r=1}^{h}w_{i_r}^m~\sum_{i\in \cI_\newr} a_{m,i} c_{i,r}^m =a_{m,0}. 
\]

Let us denote 
\[
\cR=\left\{1\le r \le h:~\sum_{i\in \cI_\newr} a_{m,i} c_{i,r}^m \ne 0\right\}.
\]
Since $a_{m,0}\ne 0$, clearly we have $\cR \ne \varnothing$. In particular, we can write  
\begin{equation}
\label{eq:eq R}
\sum_{r\in \cR} w_{i_r}^m~\sum_{i\in \cI_\newr} a_{m,i} c_{i,r}^m =a_{m,0}.
\end{equation}

As in the homogeneous case, we observe that we have only finitely many roots of unity of order at most $m\le s$.
Since again $a_{m,0}\ne 0$, following the proof of~\cite[Lemma~3.3]{ManOstShp}, we split the left-hand side of~\eqref{eq:eq R} into non-degenerate vanishing subsums and prove that there exists a finite subset 
$\cC_{\Gamma,\vec{a}_1,\ldots,\vec{a}_{m}}\subseteq K^{\ast}$ containing $\cC_{\Gamma,\vec{a}_1,\ldots,\vec{a}_{m-1}}$ of cardinality $\sharp\, \cC_{\Gamma,\vec{a}_1,\ldots,\vec{a}_{m}}\ll 1$, such that for some $t \in \cR$ we have 
\[
w_{i_t}\in \cC_{\Gamma,\vec{a}_1,\ldots,\vec{a}_{m}},
\]
and for any $i\in\cI_i$, $i\ne i_t$, $w_i=c_{i,t}w_{i_t}\in \cC^2_{\Gamma,\vec{a}_1,\ldots,\vec{a}_{m}}$.
Moreover, since $\sharp\, \cI_{r}\ge m$ for all $r$, we have 
\begin{equation}\label{eqn:m many good w}
\sharp\, \{1\le i\le s:~w_i\in \cC^2_{\Gamma,\vec{a}_1,\ldots,\vec{a}_{m}}\}\ge 
\sharp\, \cI_t \ge m.
\end{equation}

In particular, the proof concludes immediately if we have $s=m$. Therefore, we may assume that $s-m\ge 1$. Moreover, fixing $w_i$ with $i$ as in~\eqref{eqn:m many good w}, we obtain $O(1)$ systems of equations of the form
\begin{align*}
&a_{j,1}v_1^j +\ldots+a_{j,s-m}v_{s-m}^j =\widetilde a_{j,0}, \qquad j =1, \ldots k,\\
& \qquad  \qquad  \qquad v_1,\ldots, v_{s-m}  \in \cW, 
\end{align*}
for some fixed
\[
\widetilde a_{1,0},\ldots,\widetilde a_{k,0}\in K^*.
\]

Note that the tuple of coefficients appearing in any of the above 
systems of equations is non-degenerate. In particular, from Section~\ref{sec:hom case}, we get the contribution $O\(W^{\lfloor (s-m)/(k+1) \rfloor}\)$
in the case $\widetilde a_{1,0}=\ldots=\widetilde a_{k,0}=0$. Otherwise, since the system of equations has $1\le s-m\le s-1$ variables, by induction, each of the remaining cases of $(\widetilde a_{1,0},\ldots,\widetilde a_{k,0})  \in \(K^*\)^k \setminus \{0\}$ contributes at most $O\(W^{\lfloor (s-m-1)/(k+1) \rfloor}\)$, 
and the proof concludes.

\section{Proof of Theorem~\ref{thm:VMVT}}

\subsection{Reduction to irreducible systems}
For each $t=1,\ldots, s$, we count the number of solutions to~\eqref{eq:VMVT}, such that $s-t$ many values of $w_1,\ldots,w_s$ coincides with $s-t$ many values of $w_{s+1},\ldots, w_{2s}$.  Then, 
relabeling the indices, our  system of equations becomes of the form
\begin{equation}\label{eq:VMVTreduced}
\begin{split}
&w_1^j+\ldots+w_t^j
= w_{t+1}^j+\ldots+w_{2t}^j,
\qquad j=1,\ldots,k,\\
& \qquad  \qquad  w_1,\ldots, w_{t}, w_{t+1},\ldots, w_{2t}  \in \cW,
\end{split}
\end{equation}
with the more stringent restriction 
\begin{equation}\label{eq:nothingcommon}
\{w_1,\ldots,w_t\}\cap\{w_{t+1},\ldots,w_{2t}\}=\varnothing.
\end{equation}

We denote by $J_{t,k}^{\ast}(\cW)$ the number of solutions to the system of such equations, which we call {\it irreducible\/}. 

By~\eqref{eq:Jss}, we have 
\[J_{t,k}^{\ast}(\cW)=J_{t,k}(\cW)=0,\qquad t=1,\ldots, k.\]
In particular, we can write
\begin{equation}\label{eq:SumOfJ'}
    J_{s,k}(\cW)= \sum_{t=1}^{s} W^{s-t} J_{t,k}^{\ast}(\cW)=\sum_{t=k+1}^{s} W^{s-t} J_{t,k}^{\ast}(\cW).
\end{equation}

Hence to prove Theorem~\ref{thm:VMVT} it is enough to 
show that 
\begin{equation}\label{eq:J'}
J_{t,k}^{\ast}(\cW)\ll W^{\lfloor 2t/(k+1)\rfloor},
\end{equation}
for any $1\le k< t\le s$. To prove the claim we argue by induction on $k$,  exactly as in the proof of Theorem~\ref{thm:SystSubgrGen}. Let us provide a brief sketch of the argument.

By induction on $k$, we show that there exists a finite subset $\cC_{\Gamma}\subseteq K^{\ast}$ of cardinality $\sharp\, \cC_{\Gamma}\ll 1$, satisfying the following: for any solution $(w_1,\ldots,w_t,w_{t+1},\ldots, w_{2t})$ to~\eqref{eq:VMVTreduced}, there exists a partition
\[
\{1,\ldots,2t\}=\bigsqcup_{\newr=1}^{h_k} \cI^{(k)}_{\newr},
\]
such that $\sharp\, \cI^{(k)}_{\newr}\ge k+1$ and $w_{i_1}/w_{i_2}\in \cC_{\Gamma}$, for any $i_1,i_2 \in \cI^{(k)}_{\newr}$ and any $r=1,\ldots,h_k$. If we show this, then~\eqref{eq:J'} follows.

\subsection{Basis of induction: $k=1$}
The proof follows by the same argument as in  Section~\ref{sec:k=1powsum}.

\subsection{Inductive step: $k\ge 2$} For each $j=1,\ldots, k$, let us denote
\[
a_{j,i}=1,\quad i\in \{1,\ldots, t\} \mand a_{j,i}=-1, \quad i\in \{t+1,\ldots, 2t\}.
\]

Let  $(w_1,\ldots,w_t,w_{t+1},\ldots,w_{2t})$ be a solution to~\eqref{eq:VMVTreduced}. Assume that the inductive statement holds for $k-1$, that is,  there exists a partition  
\[
\{1,\ldots,2t\}=\bigsqcup_{\newr=1}^{h_{k-1}} \cI^{(k-1)}_{\newr},
\]
such that $\sharp\, \cI^{(k-1)}_{\newr}\ge k$ for each $r=1,\ldots, h_{k-1}$. 
Fix some index $i_r\in \cI^{(k-1)}_\newr$ for each $r=1,\ldots,h_{k-1}$.
Then for any $i\in \cI^{(k-1)}_\newr$, one can write $w_i=c_{i,r} w_{i_\newr}$ for some $c_{i,r}\in \cC_{\Gamma}$, with $\cC_{\Gamma}$ as above. In particular, we have
\[\sum_{r=1}^{h_{k-1}}\(\sum_{i\in \cI^{(k-1)}_\newr} a_{j,i} c_{i,r}^j\)w_{i_r}^j=0,\qquad j=1,\ldots, k.\]

If $\sharp\, \cI^{(k-1)}_{\newr}\ge k+1$ for all $r=1,\ldots,h_{k-1}$, then we are done. Otherwise, let 
$1\le r_0\le h_{k-1}$ be an integer such that $\sharp\, \cI^{(k-1)}_{\newr_0}=k$. Following the same argument as in the proof of Theorem~\ref{thm:SystSubgrGen}, it is enough to show that there exists some $1\le j\le k$ (depending on $\cI_{r_0}$ and $\{c_{i,r_0}:~i\in \cI_{r_0}\}$), such that
\begin{equation}\label{eq:jthsumnonzero}
\(\sum_{i\in \cI^{(k-1)}_{\newr_0}} a_{j,i} c_{i,r}^j\)w_{i_{r}}^j=\sum_{i\in \cI^{(k-1)}_{\newr_0}} a_{j,i} w_i^j\ne 0.
\end{equation}

Let us now set 
\[\cI_1=\cI^{(k-1)}_{\newr_0} \cap \{1,\ldots, t\} \mand \cI_2= \cI^{(k-1)}_{\newr_0} \cap \{t+1,\ldots, 2t\}.\] Suppose~\eqref{eq:jthsumnonzero} does not hold, then we have
\begin{equation}\label{eq:sum wj two sides}
\sum_{i\in \cI_1}  w_i^j=\sum_{i\in \cI_2}  w_i^j,\qquad j=1,\ldots,k.
\end{equation}

Without loss of generality, we may assume that $\sharp\, \cI_1\ge \sharp\, \cI_2$. If $\sharp\, \cI_1> \sharp\, \cI_2$, then adding sum of powers of zeroes to the right side above, Lemma~\ref{lem:VMVTsmalls} shows that $0\in \{w_i:i\in \cI_1\}$, which is not possible. Moreover if we have $\sharp\, \cI_1= \sharp\, \cI_2$, then Lemma~\ref{lem:VMVTsmalls} implies 
\[\{w_i:~i\in \cI_1\}=\{w_i:~i\in \cI_2\},\]
which contradicts~\eqref{eq:nothingcommon}.

Now following the same argument as in the proof of Theorem~\ref{thm:SystSubgrGen}, we can construct the desired partition of $\{1,\ldots,2t\}$. The proof of~\eqref{eq:J'} concludes.

\subsection{Concluding the proof}

Combining~\eqref{eq:SumOfJ'} and~\eqref{eq:J'}, we have
\begin{align*}
J_{s,k}(\cW)&= \sum_{t=k+1}^{s} W^{s-t} J_{t,k}^{\ast}(\cW)\le \sum_{t=k+1}^{s} W^{s-t} \cdot W^{\lfloor 2t/(k+1)\rfloor}\\
&\le \sum_{t=k+1}^{s} W^{s-\frac{t(k-1)}{(k+1)}}\ll W^{s-k+1},
\end{align*}
and the proof concludes.

\section{Proof of Theorem~\ref{thm:lrslineqn}} 
\label{sec:GenEq} 

\subsection{Generalisation}
For any 
\[
\gvec=\(\gamma_{i,j}\)_{\substack{1\le i\le s\\ 1\le j\le d}}\in K^{sd}
\quad \mathrm{and}\quad a_0\in K,
\]
let $S_{\gvec,a_0}(H)$ denote the number of solutions to the equation
\begin{equation}\label{eq:sa}
\sum_{i=1}^s\sum_{j=1}^d \gamma_{i,j}\alpha_j^{x_i}=a_0,
\quad 1\le x_1,\ldots,x_s\le H.
\end{equation}

To prove Theorem~\ref{thm:lrslineqn} we prove the following stronger result, which is also needed in its full strength for the proof of Theorem~\ref{thm:powersumwithLRS} in Section~\ref{sec:powersumsLRS}. 

\begin{thm}\label{thm:anycombination}
Assume that neither any of $\alpha_1,\ldots,\alpha_d$ nor any of the ratios $\alpha_{j_1}/\alpha_{j_2}$ with $1\leq j_1\neq j_2\leq d$, is a root of unity. Then uniformly over $\gvec=(\gamma_{i,j})\in K^{sd}$ such that, for every $i=1,\ldots,s$, at least one of the $\gamma_{i,1},\ldots,\gamma_{i,d}$ is nonzero, we have
    \begin{equation}\label{eq:s0}
    S_{\gvec,0}(H)\ll H^{\lfloor s/2 \rfloor}, 
    \end{equation}
   and
    \begin{equation}\label{eq:sa0}
    S_{\gvec,1}(H)\ll H^{\lfloor (s-1)/2 \rfloor}.
    \end{equation}
    \end{thm}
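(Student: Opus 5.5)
We regard \eqref{eq:sa} as a linear equation in the $n\le sd$ quantities $g_{i,j}=\alpha_j^{x_i}$, with coefficients $\gamma_{i,j}$. All these lie in the finitely generated group $\Gamma=\langle\alpha_1,\ldots,\alpha_d\rangle$, whose rank is at most $d$. We discard the terms with $\gamma_{i,j}=0$; by hypothesis each index $i$ keeps at least one term.

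\textbf{Homogeneous case \eqref{eq:s0}.} Fix a solution $(x_1,\ldots,x_s)$ and split the vanishing sum into non-degenerate vanishing subsums. There are $O(1)$ possible splittings, so we may fix one. Each subsum has at least two terms. A subsum cannot consist only of terms with the same index $i$: by Lemma~\ref{lem:non-deg_zero-sum} this would force $\alpha_{j_1}^{x_i}/\alpha_{j_2}^{x_i}\in\cC$ for a finite set $\cC$. Since $\alpha_{j_1}/\alpha_{j_2}$ is not a root of unity, each such value is taken by at most one $x_i$, so only $O(1)$ choices of $x_i$ arise. Handling this properly needs a small bookkeeping step: we mark such indices $i$ as ``fixed up to $O(1)$ choices''.

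Now build a graph on $\{1,\ldots,s\}$, joining $i_1\ne i_2$ whenever a subsum contains a term with index $i_1$ and a term with index $i_2$. Lemma~\ref{lem:non-deg_zero-sum} gives $\alpha_{j_1}^{x_{i_1}}/\alpha_{j_2}^{x_{i_2}}\in\cC$ for some $j_1,j_2$, with $\cC$ finite and $\sharp\,\cC\ll1$. We want this to determine $x_{i_2}$ from $x_{i_1}$ in $O(1)$ ways. Here we cannot invoke Lemma~\ref{lem:x1vsx2}, because we do not assume multiplicative independence. Instead we argue directly. Suppose $\alpha_{j_1}^{x}\alpha_{j_2}^{-y}=c$ and $\alpha_{j_1}^{x}\alpha_{j_2}^{-y'}=c$. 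Then $\alpha_{j_2}^{y-y'}=1$, and since $\alpha_{j_2}$ is not a root of unity, $y=y'$. So for fixed $x_{i_1}$, each $(c,j_1,j_2)$ allows at most one $x_{i_2}$.

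Consequently, within each connected component of the graph, fixing one $x_i$ determines all the others in $O(1)$ ways. Components that are singletons $\{i\}$ must have all the terms of $i$ lying in subsums confined to index $i$; those $x_i$ take $O(1)$ values. Every non-singleton component has at least two indices, so the number of free parameters is at most $\lfloor s/2\rfloor$. This gives $O(H^{\lfloor s/2\rfloor})$.

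\textbf{Inhomogeneous case \eqref{eq:sa0}.} Proceed as in the proof of Theorem~\ref{thm:SystSubgrGen} and \cite[Lemma~3.3]{ManOstShp}. Write the equation with right-hand side $1$ and take the subsum $\cQ$ that equals $1$ and is minimal (non-degenerate). Lemma~\ref{lem:non-deg} shows that each term $\alpha_j^{x_i}$ in $\cQ$ takes $O(1)$ values. Since $\alpha_j$ is not a root of unity, each such $x_i$ then takes $O(1)$ values. At least one index $i$ is thereby fixed.

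The remaining terms satisfy a vanishing equation. After fixing $x_i$, we substitute its value and move the remaining terms of index $i$ to the right-hand side. This yields an equation in at most $s-1$ variables whose right-hand side is either $0$ or a constant, which we normalise to $1$. Induction on $s$, together with the homogeneous bound, then gives $\max\{H^{\lfloor (s-1)/2\rfloor},H^{\lfloor (s-2)/2\rfloor}\}$. One must check that each remaining index still has a nonzero coefficient; indices all of whose terms were absorbed are themselves fixed, which only helps.

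\textbf{Main obstacle.} The main difficulty is the bookkeeping in the homogeneous case. Terms from the same index $i$ may fall into different subsums, so the graph and component argument must be done carefully. We also need uniformity in $\gvec$: the constant from Lemma~\ref{lem:non-deg} depends only on $n$ and the rank, so this should be fine.
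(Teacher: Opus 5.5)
Your proposal is correct. For \eqref{eq:sa0} you follow essentially the paper's argument. The paper takes a maximal vanishing subset $\cJ(\vx)$ and applies Lemma~\ref{lem:non-deg} to its complement, which is exactly a minimal subsum equal to $1$ in your sense. It then fixes one variable in $O(1)$ ways and inducts on $s$.

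For \eqref{eq:s0}, however, your route differs from the paper's. The paper proves \eqref{eq:s0} by an interleaved induction. The values of $x_s$ with $\sum_j \gamma_{s,j}\alpha_j^{x_s}=0$ are $O(1)$ in number, by Skolem--Mahler--Lech used in a form uniform in the coefficients. Each of these contributes $O(H^{\lfloor (s-1)/2\rfloor})$ by the homogeneous bound for $s-1$. Each of the remaining at most $H$ values of $x_s$ leaves an inhomogeneous equation in $s-1$ variables, contributing $O(H^{\lfloor (s-2)/2\rfloor})$ by \eqref{eq:sa0} for $s-1$. So the two bounds are proved jointly, and \eqref{eq:s0} takes three lines.

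You instead prove \eqref{eq:s0} directly for all $s$, by the connected-components argument on non-degenerate vanishing subsums that the paper uses for Theorem~\ref{thm:powersumwithLRS}. Your key observation is that Lemma~\ref{lem:x1vsx2} is unnecessary here. You only need $x_{i_2}$ to be determined by $x_{i_1}$ up to $O(1)$ choices, not $x_{i_1}-x_{i_2}$ to lie in a finite set, and for that it suffices that $\alpha_{j_2}$ is not a root of unity.

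What each approach buys:
\begin{itemize}
\item Your version is longer but self-contained. It makes uniformity in $\gvec$ transparent: the $s=1$ zero count comes from Lemma~\ref{lem:non-deg_zero-sum} and the ratio condition, not from a uniform Skolem--Mahler--Lech theorem.
\item Your version also describes the solution set: a partition into blocks of size at least two, each governed by a single free variable.
\item The paper's induction is shorter but yields only the count.
\end{itemize}

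One cosmetic point: the sentence ``A subsum cannot consist only of terms with the same index~$i$'' should read ``if a subsum consists only of terms of index~$i$, then $x_i$ takes $O(1)$ values''. This is exactly your singleton-component case, so no separate bookkeeping step is needed.
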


As in~\cite[Section~3.2]{ManOstShp}, we prove Theorem~\ref{thm:anycombination}
by induction on $s$.  

\subsection{Basis of induction: $s=1$}\label{sec:s0inductionbase}
The proof for the case $a_0=0$ follows from the Skolem-Mahler-Lech theorem. Indeed, from the conditions on $\alpha_j$ and $\gamma_{1,j}$, $j=1,\ldots,d$,  $u(x_1)=\sum_{j=1}^d\gamma_{1,j} \alpha_j^{x_1}$ is a non-zero non-degenerate linear recurrence, and hence we have $S_{\gvec,0}(H)=O(1)$.

For $a_0=1$, we have the equation 
\[
\gamma_{1,1} \alpha_1^{x_1}+\ldots+\gamma_{1,d} \alpha_d^{x_1}=1.
\] 
Then, if no proper subsum of the left-hand side vanishes, Lemma~\ref{lem:non-deg} applies, since none of $\alpha_1,\ldots,\alpha_d$ is a root of unity. Otherwise, we can split the sum above into two subsums, where one of the sum is $0$ and the other sum (with at most $d-1$ terms) is $1$. 
The proof then follows by induction on $d$ (the case $d=1$ being immediate), and applying the Skolem-Mahler-Lech theorem again. We conclude in this case as well that $S_{\gvec,1}(H)=O(1)$.

\subsection{Inductive step: $s\ge 2$} 
Now we prove~\eqref{eq:s0} for any $s\ge 2$. Again by  the Skolem--Mahler--Lech theorem,  we have
\[
\gamma_{s,1} \alpha_1^{x_s}+\ldots+\gamma_{s,d}\alpha_d^{x_s}=0,
\]
for finitely many $x_s\ge 1$, because at least  one of the $\gamma_{s,1},\ldots, \gamma_{s,d}$ is non-zero. By induction, each such finitely many $x_s$ contributes only $\ll H^{\lfloor (s-1)/2 \rfloor}$ solutions $(x_1,\ldots, x_s)$ to~\eqref{eq:sa} for $a_0=0$. Then for any remaining fixed $1\le x_s\le H$, we have the equation 
   \begin{align*}
\sum_{i=1}^{s-1}\sum_{j=1}^d \gamma_{i,j}\alpha_j^{x_i}&=-\(\gamma_{s,1} \alpha_1^{x_s}+\ldots+\gamma_{s,d}\alpha_d^{x_s}\)\ne 0,\\
&  1\le x_1,\ldots, x_{s-1} \le H.
\end{align*}

By induction, we have $O\(H^{\lfloor (s-2)/2\rfloor}\)$ solutions $(x_1,\ldots, x_{s-1})$ to the equation above. Since there are at most $H$ possibilities of $x_s$, the proof of~\eqref{eq:s0} concludes.

Now we prove~\eqref{eq:sa0} for any $s\ge 2$. Consider the set of indices 
\[\cI(\gvec)=\left\{(i,j):~\gamma_{i,j}\ne 0,~1\le i\le s,~1\le j\le d\right\}.\]

Let $T$ denote the number of solutions to~\eqref{eq:sa} for $a_0=1$. For each such solution $\vx=(x_1,\ldots, x_{s})$, we associate a subset $\cJ(\vx) \subseteq \cI(\gvec)$ 
with the largest cardinality, such that
\[
\sum_{(i,j)\in \cJ(\vx)} \gamma_{i,j}\alpha_j^{x_i}=0.
\]

For each subset $\cI \subseteq I(\gvec)$, let $T_\cI$ be the number of solutions $\vx$ to~\eqref{eq:sa} with $a_0=1$ and $\cJ(\mathbf{x})=\cI$. Then we can find a subset $\cJ\subseteq  \cI(\gvec)$,
such that
\[
T_\cJ=\max_{\cI\subseteq \cI(\gvec)} T_\cI.\]
In particular, it is now enough to estimate $T_\cJ$, as we can write
\[
T= \sum_{\cI \subseteq \cI(\gvec)}   T_\cI
 \ll  T_\cJ.
\]

For any solution $\vx$ to~\eqref{eq:sa} with $a_0=1$ and $\cJ(\vx)=\cJ$, we can split~\eqref{eq:sa} into two parts: 
\begin{equation}\label{eq:deg}
\sum_{(i,j)\in \cJ} \gamma_{i,j}\alpha_j^{x_i} =0,
\end{equation}
and
\begin{equation}\label{eq:nondeg}
\sum_{(i,j)\in \cI(\gvec)\setminus \cJ} \gamma_{i,j}\alpha_j^{x_i} =1.
\end{equation}

Note that no subsum on the left-hand side of~\eqref{eq:nondeg} vanishes, by the maximality property 
of  $\cJ(\vx) =\cJ$. Thus, Lemma~\ref{lem:non-deg} shows that the number of solutions to~\eqref{eq:nondeg} is $O(1)$.

Note that for $a_0 = 1$ we have $\cJ\ne \cI(\gvec)$, and hence~\eqref{eq:nondeg} involves at least one of the variables $x_i$ for some $1\le i\le s$. Without loss of generality, assume that~\eqref{eq:nondeg} contains the variable $x_{s}$. Then Lemma~\ref{lem:non-deg} implies that the number of possible $x_{s}$ is $O(1)$, again because none of the $\alpha_1,\ldots, \alpha_d$ is a root of unity. For each such fixed $x_{s}$, the equation~\eqref{eq:deg} becomes an equation of the form
\begin{equation}\label{eq:s-2variables}
\sum_{\substack{1\le i\le s-1\\ 1\le j\le d}} \gamma_{i,j}\alpha_j^{x_i}=\widetilde a_0,\quad 1\le x_1,\ldots, x_{s-1} \le H,
\end{equation}
for some $\widetilde a_0 \in K$ depending on $x_{s}$ and $\cJ$.   By induction, the number of solutions $(x_1,\ldots,x_{s-1})$ to~\eqref{eq:s-2variables} is $O\(H^{\lfloor (s-1)/2\rfloor}\)$, and the proof concludes.

\section{Proof of  Theorem~\ref{thm:powersumwithLRS}} \label{sec:powersumsLRS}

\subsection{Preliminaries} We prove the result by induction on $s$. The case $s=1$ follows as in Section~\ref{sec:s0inductionbase} using Lemma~\ref{lem:non-deg}.  We now assume that it 
holds for systems of equations with at most $s-1$ variables.

\subsection{The homogeneous case:  $\(a_{1,0}, a_{2,0}\)=(0,0)$} \label{sec:partition LRS}
Let $(x_1,\ldots,x_s)$ be a solution to the first equation in~\eqref{eq:geneqnsLRS}, then we can write
\begin{equation}\label{eqn:EXPANDED}
\sum_{\substack{1\le i\le s\\ 1\le j\le d}} \gamma_{i,j}\alpha_j^{x_i}=0,\qquad 1\le x_1,\ldots, x_{s} \le H,\ \gamma_{i,j}=a_{1,i}\beta_j,
\end{equation}
where $\beta_1,\ldots,\beta_d\in K^\ast$ are defined as in~\eqref{eq:LRS char roots} and~\eqref{eq:char roots}. We divide this equation into non-degenerate vanishing sub-sums, as in Section~\ref{sec:k=1powsum}.

Now we associate a graph $\cG(\vx)$ with $s$ vertices $1,\ldots,s$, such that there is an edge between $i_1$ and $i_2$, if and only if, $x_{i_1}$ and $x_{i_2}$ appear in one of the same non-degenerate vanishing sub-sum. Note that $\cG(\vx)$ having an isolated vertex, say some $i$, corresponds to a non-degenerate vanishing sub-sum of the form
\[
\sum_{j\in \cJ} \gamma_{i,j}\alpha_j^{x_i}=0
\]
for some $\cJ\subseteq \{1,\ldots,d\}$. Then by the Skolem--Mahler--Lech theorem, there exists a  finite set $\cZ_{\vu,\vec{a}_1}$ of integers of cardinality $\sharp\, \cZ_{\vu,\vec{a}_1}\ll 1$ (that is, bounded only in terms of $s$ and $d$), such that $x_i\in \cZ_{\vu,\vec{a}_1}$.

Now let $i_1$ and $i_2$ be two vertices belong to some same connected component of $\cG(\vx)$. As per the assumption, we have $a_{1,i}\in K^{\ast}$ for each $i=1,\ldots, s$. In particular by Lemma~\ref{lem:non-deg_zero-sum} applied to each of the non-degenerate vanishing sub-sums of~\eqref{eqn:EXPANDED}, there exists a finite subset $\cC_{\vu, \vec{a}_1}\subseteq K^{\ast}$ of cardinality $\sharp\, \cC_{\vu, \vec{a}_1} \ll 1$, such that 
\[\alpha_{j_1}^{x_{i_1}}/\alpha_{j_2}^{x_{i_2}}\in \cC_{\vu,\vec{a}_1},\quad \mathrm{for~some}\quad 1\le j_1, j_2\le d.\]
Then, applying Lemma~\ref{lem:x1vsx2}, there exists a finite subset $\cY_{\vu,\vec{a}_1}$ containing $\cZ_{\vu,\vec{a}_1}$, of cardinality $\sharp\, \cY_{\vu,\vec{a}_1}\ll 1$, such that
\[x_{i_1}-x_{i_2}\in \cY_{\vu,\vec{a}_1}.\]

Simply considering the partition determined by connected components of a graph, we deduce that for any solution $\vx=(x_1,\ldots,x_s)$ to~\eqref{eqn:EXPANDED}, there exists a partition
\[
\{1,\ldots,s\}=\bigsqcup_{\newr=1}^{h} \cI_{\newr},
\]
such that for any $r$ with $\sharp\, \cI_{\newr}\ge 2$, we have
\begin{equation}\label{eq:xi1 - xi2}
x_{i_1}-x_{i_2}\in \cY_{\vu,\vec{a}_1},\quad \mathrm{for~any}\quad i_1,i_2\in \cI_{\newr}.
\end{equation}
In addition, for any $r$ with $\sharp\, \cI_{\newr}= 1$, we have 
\[x_i\in \cY_{\vu,\vec{a}_1},\quad \mathrm{for~any}\quad i\in \cI_r.\]

Now, let $T$ denote the number of solutions to the system of equations
\begin{align*}
&a_{1,1}u(x_1)+\ldots+a_{1,s}u(x_s) = a_{1,0}=0, \\
&a_{2,1}u(x_1)^2+\ldots+a_{2,s}u(x_s)^2 =  a_{2,0}=0, 
\end{align*}
where $1\le x_1,\ldots, x_s \le H$. If $x_i\in \cY_{\vu,\vec{a}_1}$ for some $1\le i\le s$,  then, by induction, we have
\[
T\ll  H^{\lfloor (s-1)/3 \rfloor}.
\]

Thus, it is enough for us to fix a partition  
\begin{equation}\label{eq: part 1..s}
\{1,\ldots,s\}=\bigsqcup_{\newr=1}^{h} \cI_{\newr},
\end{equation}
such that \begin{equation}\label{eq: IR ge 2}
\sharp\, \cI_{\newr} \ge 2,\qquad r=1,\ldots, h,
\end{equation}
and estimate the number of solutions $\vx=(x_1,\ldots,x_s)$ to~\eqref{eq:geneqnsLRS}, corresponding to this fixed partition.
Observe that,  by~\eqref{eq:xi1 - xi2}, one has
\[
T \ll H^h.
\]
 Certainly, we need to consider the case that there exists some $1\le r_0\le h$ such $\sharp\, \cI_{r_0}=2$, as otherwise we already have $h\le \lfloor s/3 \rfloor$  and the result is immediate.

Without loss of generality,  we can assume that $\cI_{r_0}=\{1,2\}$. By the definition of the associated graph, we know that each non-degenerate vanishing subsum in~\eqref{eqn:EXPANDED} containing either $x_1$ or $x_2$, or both of them, is precisely of the form
\[
a_{1,1}\(\sum_{j_1\in \cJ_1} \beta_{j_1}\alpha_{j_1}^{x_1}\)+a_{1,2}\(\sum_{j_2\in \cJ_2}\beta_{j_2}\alpha_{j_2}^{x_2}\)=0,
\]
for some (may be empty) subsets $\cJ_1$ and $\cJ_2$ of $\{1,\ldots,d\}$. In fact, any two such sets $\cJ_1$ (and any two such
sets $\cJ_2$, as well) are pairwise disjoint. Thus, summing over all such $\cJ_1$ and $\cJ_2$, we obtain
\[
a_{1,1}\(\sum_{1\le j \le d} \beta_{j}\alpha_{j}^{x_1}\)+a_{1,2}\(\sum_{1\le j\le d}\beta_{j}\alpha_{j}^{x_2}\)=0.
\]

Now by~\eqref{eq:xi1 - xi2}, we have $x_2-x_1=z$ for some $z\in \cY_{\vu,\vec{a}_1}$. In particular, we can write
\begin{equation}\label{eq:eqn with x1 and z}
\sum_{1\le j\le d}\beta_j(a_{1,1}+ a_{1,2}\alpha_j^{z})\, \alpha_{j}^{x_1}=0.
\end{equation}

Let us first assume that $a_{1,1}+a_{1,2}\ne 0$. Then we have
\begin{equation}\label{eq:one nonzero coeff}
\(a_{1,1}+a_{1,2}\alpha_{j}^z\)_{1\le j\le d}\ne 0.
\end{equation}  
Indeed, as per the assumption, the above is true for $z=0$. If $z\ne 0$ and the above does not hold, then since $d\ge 2$, we get $\alpha_{1}^{z}=\alpha_{2}^{z}$, which contradicts  the fact that the sequence $\vec{u}$ is non-degenerate.

Since we have~\eqref{eq:one nonzero coeff}, applying Theorem~\ref{thm:anycombination} for $s=1$ to \eqref{eq:eqn with x1 and z}, there are at most finitely many solutions $x_1$ to \eqref{eq:eqn with x1 and z}, and thus finitely many solutions $x_2$ as well.

Therefore, it is now enough to assume that the partition satisfies the following: for any $1\le r\le h$ with $\sharp\, \cI_r=2$, we have 
\begin{equation}\label{eq:sum 0}
a_{1,i}+a_{1,i^{\star}}=0,\quad \mathrm{where}\quad \cI_r=\{i,i^{\star}\}.
\end{equation}

Now for each $1\le r\le h$ we fix some $i_r\in \cI_r$ and for each $i\in \cI_{r}$, we write $x_i=x_{i_r}+z_{i,r}$, for some $z_{i,r}\in \cY_{\vu,\vec{a}_1}$.

Using the representation~\eqref{eq:LRS char roots}, we have
\begin{equation}\label{eq:2nd eq beta_i LRS}
a_{2,1}u(x_1)^2+\ldots+a_{2,s}u(x_s)^2 = \sum_{1\le i\le s}a_{2,i} \sum_{1\le j_1,j_2\le d} \beta_{j_1} \beta_{j_2} \(\alpha_{j_1} \alpha_{j_2}\)^{x_i}.
\end{equation}
Therefore, we obtain at most $(\sharp\,  \cY_{\vu,\vec{a}_1})^{s}\ll 1$ different equations of the form
\begin{equation}\label{eq:sum with bj1 bj2}
     \sum_{1\le \newr\le h}~\sum_{1\le j_1,j_2\le d} b_{\newr,j_1,j_2} \(\alpha_{j_1} \alpha_{j_2}\)^{x_{i_\newr}}=0,\quad 1\le x_{i_1},\ldots,x_{i_h}\le H,
\end{equation}    
for some $b_{\newr,j_1,j_2} \in K$, $\newr=1,\ldots, h$ and $j_1,j_2=1,\ldots, d$. 

Let us denote
\[\ell=\sharp\,\left\{1\le \newr\le h:~\(b_{\newr,j_1,j_2}\)_{1\le j_1,j_2\le d}\ne 0 \right\}.
\]
Since $\alpha_1,\ldots,\alpha_d$ satisfy the conditions of Assumption~\ref{asmp:onalphas}, by Theorem~\ref{thm:anycombination}, the number of solutions to~\eqref{eq:sum with bj1 bj2} is at most
\begin{equation}\label{eq:bounds for second eqn}
O\left(H^{h-\ell+\left\lfloor \ell/2 \right\rfloor}\right)
=
O\left(H^{h-\left\lceil \ell/2 \right\rceil}\right).
\end{equation}

Let us now consider 
$\cK=\{1\le r\le h:~\sharp\, \cI_r=2\}$ and denote $k=\sharp\, \cK$. We claim that
\begin{equation}\label{eq:ell vs k}
h\le  \lfloor (s+k)/3\rfloor \mand \ell \ge k.
\end{equation}

Note that the first inequality in~\eqref{eq:ell vs k}  follows trivially from~\eqref{eq: part 1..s} and~\eqref{eq: IR ge 2}. 

To prove the second inequality in~\eqref{eq:ell vs k}, for simplicity, let us write  $\cI_r$, $r\in \cK$, as
\[\{i_{1},i_{1}^\star\},\ldots, \{i_{k},i_{k}^\star\}.
\] 

Recall that for each $\newr=1,\ldots,k$, we have 
$x_{i_{\newr}}-x_{i_{\newr}^\star}=z_\newr$, for some $z_\newr\in \cY_{\vu,\vec{a}_1}$. Then using this in~\eqref{eq:2nd eq beta_i LRS}, we can write 
\begin{align*}
&a_{2,i_{\newr}} u\(x_{i_{\newr}}\)^2+a_{2,i_{\newr}^\star}u\(x_{i_{\newr}^\star}\)^2\\
&\quad\quad\quad\qquad = \sum_{1\le j_1,j_2\le d} \beta_{j_1}\beta_{j_2}\(a_{2,i_{\newr}}+a_{2,i_{\newr}^\star}(\alpha_{j_1}\alpha_{j_2})^{ z_\newr}\) \(\alpha_{j_1} \alpha_{j_2}\)^{x_{i_\newr}}.
\end{align*}

To show that $\ell \ge k$, it suffices to prove that for any integer $z$, we have 
\[
\(a_{2,i_{\newr}}+a_{2,i_{\newr}^\star}(\alpha_{j_1}\alpha_{j_2})^z\)_{1\le j_1,j_2\le d}\ne 0.\]
This is true for $z=0$, due to the assumptions~\eqref{eqn:a2conds d>2} and~\eqref{eq:sum 0}. Now suppose on the contrary that the above claim fails for some integer $z\ne 0$. Since we assume $d\ge 2$, taking $j_1=j_2=1$ and $2$, we get $\alpha_1^{2z}=\alpha_2^{2z}$, which is a contradiction with the fact that the sequence $\vec{u}$ is non-degenerate. This completes the proof of~\eqref{eq:ell vs k}.

Combining~\eqref{eq:bounds for second eqn} and~\eqref{eq:ell vs k} , we  establish Theorem~\ref{thm:powersumwithLRS} for $a_{1,0}=a_{2,0}=0$.

\subsection{The inhomogeneous case: $\(a_{1,0}, a_{2,0}\)\ne (0,0)$}\label{sec:inhomogeneous LRS}
As in~\eqref{eqn:EXPANDED}, the first equation in~\eqref{eq:geneqnsLRS} can be rewritten as
\[\sum_{\substack{1\le i\le s\\ 1\le j\le d}} \gamma_{i,j}\alpha_j^{x_i}=a_{1,0},\qquad 1\le x_1,\ldots, x_{s} \le H.\]
Moreover, the second equation in~\eqref{eq:geneqnsLRS} can be rewritten as
\[\sum_{1\le i\le s}a_{2,i} \sum_{1\le j_1,j_2\le d} \beta_{j_1} \beta_{j_2} \(\alpha_{j_1} \alpha_{j_2}\)^{x_i}=a_{2,0},\quad 1\le x_1,\ldots, x_{s} \le H.\]

Since $a_{1,0}\ne 0$ or $a_{2,0}\ne 0$, following the proof of~\eqref{eq:sa0}, in both cases we see that there exists a finite subset $\cC_{\vu,\vec{a}_1,\vec{a_2}}\subseteq K^{\ast}$ of cardinality $\sharp\,  \cC_{\vu,\vec{a}_1,\vec{a}_2}\ll 1$, such that 
\[
x_i\in \cC_{\vu,\vec{a}_1,\vec{a}_2},\quad \mathrm{for~some}\quad 1\le i\le s.
\]
 Without loss of generality, we can assume that this is true for $i=s$.

In particular, fixing $x_s$ in $O(1)$ ways,  the system of equations in~\eqref{eq:geneqnsLRS} splits 
into $O(1)$ systems of equations of the form 
\begin{align*}
&a_{1,1}u(x_1)+\ldots+a_{1,s-1}u(x_{s-1}) = \widetilde a_{1,0},\\
&a_{2,1}u(x_1)^2+\ldots+a_{2,s-1}u(x_{s-1})^2 = \widetilde a_{2,0},\\
& \qquad  \qquad 1\le x_1,\ldots, x_{s-1} \le H,
\end{align*}
for some fixed $\widetilde a_{1,0},a_{2,0}\in K^*$.
The proof finally concludes by induction on $s$.

\section{Concluding remarks}

\subsection{Tightness of our results and possible improvements}\label{sec:low}

We now give an example,  which shows that the bound of Theorem~\ref{thm:SystSubgrGen} is tight.

Assume that $(k+1) \mid s$. Then the bound in Corollary~\ref{cor:PowerSumVals} is tight, at least when 
 \begin{equation}\label{eq:allzero}
     a_{1}=\ldots=a_{k}=0
 \end{equation} and $\Gamma$ contains the primitive $(k+1)$th roots of unity. 
  For example, choose  an arbitrary subset $\cV \subseteq \Gamma$ of cardinality $V>0$,
and consider  
\begin{equation}\label{eq: Example W}
\cW =  \left\{ \zeta_{k+1}^e v:~0\le e\le  k,\ v\in \cV \right\}. 
\end{equation}
Then we have $\cW\subseteq\Gamma$ and $V\le \sharp\, \cW=W\le (k+1)V/2$. Moreover, for each $v\in \cV$, we consider the following tuples 
\begin{equation}\label{eq:TV}
\cT(v)=\(v,\zeta_{k+1} v,\ldots,\zeta_{k+1}^{k} v\).
\end{equation}

Now solutions to~\eqref{eq:GenEqnsSubgrAllOne} with~\eqref{eq:allzero} are given by 
\[
(w_1,\ldots,w_s)
=
\(
\mathcal T(v_1),
\ldots,
\mathcal T(v_m)\),
\]
where $v_1,\ldots,v_m\in \cV$. Thus, we obtain at least $V^m=V^{\lfloor s/(k+1)\rfloor} \gg W^{\lfloor s/(k+1)\rfloor}$ solutions, as desired.

A similar construction, with a small adjustment, works also for $s$ with $(k+1)\nmid s$. It may also be used to  show the tightness of Theorem~\ref{thm:VMVT}.

More precisely, for the same set $\cW$ as in~\eqref{eq: Example W} and $\cT$ as in~\eqref{eq:TV}, a set of solution to~\eqref{eq:VMVT} with~\eqref{eq:notpermutation}, is given by
\[
\(\mathcal T(v_1), w_{k+2},\ldots, w_s, \mathcal T(v_2), w_{k+2},\ldots, w_s\),
\]
where $w_{k+2},\ldots, w_s\in \cW$ and $v_1,v_2\in \cV$ are such that $(v_1/v_2)^{k+1} \ne 1$. Since 
\[\sharp\,\left\{(v_1,v_2)\in \cV\times \cV: (v_1/v_2)^{k+1}=1\right\}\le (k+1)V,\]
we obtain at least 
$W^{s-(k+1)}V^2 \gg W^{s-k+1}$ off-diagonal solutions.

While, as we have shown, the bound of Theorem~\ref{thm:SystSubgrGen} is essentially optimal, 
in some special cases we can do better. 
For example, note that if $(w_1,\ldots,w_s)$ is a solution to~\eqref{eq:GenEqnsSubgr}, then $(ww_1,\ldots, ww_s,w)$ is also a solution to the system of equations
\begin{equation}\label{eq:extended system}
\begin{split}
&a_{j,1}u_1^j +\ldots+a_{j,s}u_s^j-a_{j,0}u_{s+1}^j=0, \qquad j =1, \ldots, k,\\
& \qquad  \qquad  \qquad u_1,\ldots, u_{s+1}  \in \cU, 
\end{split}
\end{equation}
where $\cU=\cW^2 =\{w_1w_2:~w_1w_2\in \cW\}$ is the product set of $\cW$.

Now suppose $\cW$ is a set with a small doubling. In a more quantitative form, assume that there exists some $\eta\ge 0$ such that 
\[
\sharp\, \cU=U \ll W^{1+\eta}.
\]
A natural example of such sets (with $\eta = 0$), is given by 
\[
\cW = \{\gamma_1^{x_1}\ldots \gamma_r^{x_r}:~1 \le x_i \le H_i, \ i =1, \ldots, r\}
\]
where $\gamma_1,\ldots, \gamma_r$ are the generators of $\Gamma$.

Moreover, let us assume that the coefficient matrix corresponding to the system~\eqref{eq:extended system} is also non-degenerate as defined in Section~\ref{sec:main}. In particular, we have to assume that $a_{1,0}\in K^{\ast}$. Then applying Theorem~\ref{thm:SystSubgrGen} to this new system of equations, we get
\[M_{\vec{A}}(\cW) \ll  W^{{\lfloor (s+1)/(k+1)\rfloor}(1+\eta)-1}.\]

Let $m$ be be the same quantity as in Theorem~\ref{thm:SystSubgrGen}. In particular, if 
\[
\left\lfloor (s-m)/(k+1)\right\rfloor = \left\lfloor (s+1)/(k+1)\right\rfloor,
\]
we get a better bound than in Theorem~\ref{thm:SystSubgrGen}, over the range
\begin{equation}\label{eq:eta}
0 \le \eta < \frac{1}{\left\lfloor (s+1)/(k+1)\right\rfloor}.
\end{equation}

For example, note that if we have
\[
a_{j,1}=\ldots, a_{j,s}=1 \mand a_{j,0}=-1,\quad j=1,\ldots, k,
\]
then the coefficient matrix associated to~\eqref{eq:extended system} is clearly non-degenerate. 
In this case we have $m=1$ and the above argument applies with $\eta$ 
as in~\eqref{eq:eta} if $s \not \equiv 0, -1 \pmod{k+1}$.

\subsection{Further applications and open questions}\label{sec:comments}
While studying $L_{\va}(H)$ is of independent interest, using the strategy of~\cite{ManOstShp,MOS} and the bounds on $L_{\va}(H)$, one can study the statistics of ranks, determinants and characteristic polynomials  in the families 
of $m \times n$-matrices 
\[
\(u(x_{i,j})\)_{\substack{1 \le i \le m\\ 1 \le j\le n}}, \qquad x_{ij} \in  [1,H].
\]
In particular, using  Theorems~\ref{thm:lrslineqn} and~\ref{thm:powersumwithLRS},  one can obtain analogues of some results from~\cite{BlLi,ManOstShp,MOS} for the above matrices.

Let $k\ge 1$ and $1\le d_1<d_2<\ldots<d_k$ be integers. Motivated by work of   
Salberger and  Wooley~\cite[Theorem~5.2]{SalWool} (see also~\cite[Section~1]{Wool1}), we consider the higher degree system:
\begin{equation*}
\begin{split}
&w_1^{d_j} +\ldots+w_s^{d_j} = w_{s+1}^{d_j} +\ldots+w_{2s}^{d_j},  \qquad j =1, \ldots k,\\
& \qquad  \qquad  w_1,\ldots, w_s, w_{s+1},\ldots, w_{2s}  \in \cW. 
\end{split}
\end{equation*}

Certainly, our methods are capable of handling the case $k=2$. This is because here the analogue of~\eqref{eq:sum wj two sides} is
\[
\sum_{i\in \cI_1}  w_i^{d_j}=\sum_{i\in \cI_2}  w_i^{d_j},\qquad j=1,2,
\]
where $\cI_1\cup \cI_2=\{1,2\}$. To contradict~\eqref{eq:nothingcommon}, it is enough to assume that $d_1,d_2$ are coprime and are of opposite parity. We leave the case $k\ge 3$ as an open question.

\appendix
\section{Counting matrices with a given characteristic polynomial}\label{app:CountMatr_CharPoly}
Denote $\cM_{n}(\cW)$ to be the set of $n\times n$ matrices with entries from $\cW$. Given a polynomial $f\in K[T]$, define
\[
\cP_n(\cW;f)
=\left\{
X\in \cM_n(\cW):
\det(TI_n-X)=f
\right\}.
\]  

Let 
 \begin{align*}
    \alpha(n) = n(n - 1)/2
      &+ \max\Bigl\{
        \fl{(n - 1)/2} + \fl{n(n - 1)/4},\\
      & \qquad \qquad  \qquad \qquad\fl{n/2}  + \fl{(n(n - 1)/4-1/4}
      \Bigr\}
 \end{align*}
be as in~\cite[Equation~2-2]{ManOstShp}. Inserting Corollary~\ref{cor:PowerSumVals} 
(for $k=2$) in the argument of~\cite{ManOstShp}, we improve the bound $\sharp\, \cP_n(\cW;f)\ll W^{\alpha(n)}$ of~\cite[Theorem~2.4]{ManOstShp} for $n\equiv1\pmod4$.  

\begin{thm}\label{thm:givencharpoly}
For any monic polynomial $f$ of degree $n\ge 3$, and any
$\cW \subseteq \Gamma$ of finite cardinality $W$, we have
\[
\sharp\, \cP_n(\cW;f)\ll W^{\alpha_{0}(n)},
\]
where
\[\alpha_{0}(n)
=
\begin{cases}
\alpha(n)-1, & n\equiv1\pmod4,\\
\alpha(n), & n\not\equiv1\pmod4.
\end{cases}\]
\end{thm}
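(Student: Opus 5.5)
We follow the argument of~\cite[Theorem~2.4]{ManOstShp} unchanged and only improve the single step where a bound on a power-sum system in elements of $\cW$ is used. Recall the structure of that argument. Expanding $\det(TI_n-X)=f$, the coefficients of $T^{n-1}$ and $T^{n-2}$ give two conditions: $\Tr X=-f_{n-1}$, and a condition on $\Tr X^2$, which by Newton's identities is equivalent to fixing $\Tr X^2$. Write $\Tr X^2=\sum_i x_{ii}^2+2\sum_{i<j}x_{ij}x_{ji}$ and fix the $n(n-1)$ off-diagonal entries. This costs $W^{n(n-1)/2}$ for the entries above the diagonal, and the remaining off-diagonal contribution is what \cite{ManOstShp} bounds, producing the terms $\fl{n(n-1)/4}$ or $\fl{n(n-1)/4-1/4}$ in $\alpha(n)$. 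What is left is a system for the diagonal entries $w_i=x_{ii}$:
\[
w_1+\ldots+w_n=a_1,\qquad w_1^2+\ldots+w_n^2=a_2,
\]
with $a_1,a_2$ depending on $f$ and on the fixed entries. In \cite{ManOstShp} this was bounded via \cite[Lemma~3.4]{ManOstShp}, which gives the terms $\fl{(n-1)/2}$ and $\fl{n/2}$ inside the maximum.

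The plan is to replace that lemma by Corollary~\ref{cor:PowerSumVals} with $k=2$ and $s=n$. Since its bound is uniform in $\vec a=(a_1,a_2)$, it applies for every choice of the fixed entries. It gives $O(W^{\fl{n/3}})$ solutions when $a_1=a_2=0$, $O(W^{\fl{(n-1)/3}})$ when $a_1\ne0$, and $O(W^{\fl{(n-2)/3}})$ when $a_1=0\ne a_2$. These diagonal counts must be combined with the two branches of the maximum in $\alpha(n)$. I expect the branches correspond to whether the relevant inhomogeneous term vanishes, pairing $\fl{(n-1)/2}$ with $\fl{n(n-1)/4}$ and $\fl{n/2}$ with $\fl{n(n-1)/4-1/4}$. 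In each branch, the old diagonal exponent is to be replaced by the corresponding new one, $\fl{(n-1)/3}$ or $\fl{n/3}$.

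The remaining work is arithmetic. For $n=4q+1$ we have $n(n-1)/4=q(4q+1)$, an integer, so the first branch of the maximum is $2q+q(4q+1)$ and the second is $2q+q(4q+1)-1$. The improvement has to come from the first branch. There we need the new diagonal exponent to be at most $2q-1$, that is $\fl{(4q)/3}\le 2q-1$ for $q\ge1$, which holds. We then have to check that the second branch does not become the maximum, i.e.\ that it is also at most $\alpha(n)-1$; it equals $\alpha(n)-1$ exactly, so the claimed exponent $\alpha(n)-1$ follows. For other residues we simply keep $\alpha(n)$, since the new bounds are never worse.

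The main obstacle is bookkeeping. I have to verify exactly how \cite{ManOstShp} pairs the diagonal count with the off-diagonal count in each case. In particular, the case distinction there (the parity of $n(n-1)/2$ governing the $-1/4$) must match the case distinction on $a_1$ and $a_2$ vanishing, and the homogeneous case $a_1=a_2=0$ must also fit under the $n\equiv1\pmod 4$ improvement. If the pairing differs, I would redo the case split directly, bounding the off-diagonal pairs by Lemma~\ref{lem:LinEq homog} in its homogeneous or inhomogeneous form according to whether $a_2$ vanishes.
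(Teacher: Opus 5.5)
Your route is the paper's: run the argument of \cite{ManOstShp} with Corollary~\ref{cor:PowerSumVals} ($k=2$, $s=n$) in place of \cite[Lemma~3.4]{ManOstShp}. Your final exponent for $n\equiv 1\pmod 4$ is also right. The mechanism you describe, however, needs correcting in two places.

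First, only the $N=n(n-1)/2$ entries above the diagonal are fixed. The two coefficient conditions then read
\[
\sum_{i=1}^n x_{ii}=-c_{n-1},\qquad \sum_{i=1}^n x_{ii}^2+2\sum_{i<j}x_{ij}x_{ji}=c_{n-1}^2-2c_{n-2}.
\]
The two branches of the maximum are split by whether $\sum_{i<j}x_{ij}x_{ji}$ vanishes. They are not split by parity, nor by whether your $a_1$ or $a_2$ vanishes. The $-1/4$ simply encodes $\lfloor (N-1)/2\rfloor$.
\begin{itemize}
\item If the sum vanishes, the lower entries contribute $W^{\lfloor N/2\rfloor}$ by Lemma~\ref{lem:LinEq homog}. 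The diagonal then satisfies the two-equation system, and this is the only place the Corollary enters.
\item If it does not vanish, the diagonal satisfies only the linear equation, giving $W^{\lfloor n/2\rfloor}$. For each such diagonal, the lower entries satisfy a linear equation with nonzero right-hand side, giving $W^{\lfloor (N-1)/2\rfloor}$ (Theorem~\ref{thm:SystSubgrGen} with $k=1$).
\end{itemize}
So the $\lfloor n/2\rfloor$ does not come from \cite[Lemma~3.4]{ManOstShp}, and it cannot be replaced by $\lfloor n/3\rfloor$ as your ``in each branch'' sentence says. That untouched branch already attains the maximum for $n\not\equiv 1\pmod 4$, which is exactly why only that residue class improves. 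Your arithmetic, fortunately, leaves this branch unchanged.

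Second, the first branch must be bounded uniformly in $f$. When $c_{n-1}=c_{n-2}=0$ the diagonal system is homogeneous, so the relevant exponent is $\lfloor n/3\rfloor$, not $\lfloor (n-1)/3\rfloor$. The inequality you need is therefore $\lfloor (4q+1)/3\rfloor\le 2q-1$, not $\lfloor 4q/3\rfloor\le 2q-1$. It holds for all $q\ge 1$, with equality at $q=1,2$, and this closes the homogeneous case you flagged. The resulting exponent is
\[
N+\max\bigl\{\lfloor n/3\rfloor+\lfloor N/2\rfloor,\ \lfloor n/2\rfloor+\lfloor (N-1)/2\rfloor\bigr\},
\]
which equals $\alpha(n)-1$ for $n=4q+1$.
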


We also obtain more precise bounds depending on the second coefficient of $f$.

\begin{thm}\label{thm:givencharpolyprecise} 
For $f = T^n + c_{n-1}T^{n-1} + \ldots + c_0 \in K[T]$ of degree $n\ge 3$, and any
$\cW \subseteq \Gamma$ of finite cardinality $W$, we have
\[
\sharp\, \cP_n(\cW;f)
\ll
W^{\beta(n)}
\begin{cases}
W^{-\lambda_{0}(n)}, & c_{n-1}=0,\\
W^{-\nu_{0}(n)}, & c_{n-1}\ne0,
\end{cases}
\] 
where $\beta(n)=(3n^2-n)/4$,
\[
\lambda_{0}(n)=
\begin{cases}
1, & n\equiv 0,3 \pmod4,\\
3/2, & n\equiv 1 \pmod4,\\
1/2, & n\equiv 2 \pmod4,
\end{cases}
\]
and
\[
\nu_{0}(n)=
\begin{cases}
2, & n\equiv 0 \pmod 4,\ n\ne4,\\
3/2, & n\equiv 1,2 \pmod 4,\\
1, & n\equiv 3 \pmod 4\text{ or }n=4.
\end{cases}
\]
\end{thm}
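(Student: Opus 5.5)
We follow the strategy of~\cite[Theorem~2.5]{ManOstShp} (the refined version of~\cite[Theorem~2.4]{ManOstShp}) step by step. The only change is that wherever that argument uses the bound of~\cite[Lemma~3.4]{ManOstShp} for the system of two diagonal equations, we use Corollary~\ref{cor:PowerSumVals} with $k=2$ instead. Recall that the coefficients $c_{n-1}$ and $c_{n-2}$ of $\det(TI_n-X)$ are determined by the traces. Indeed, $c_{n-1}=-\Tr X$, and $c_{n-2}$ is determined by $\Tr X$ and $\Tr X^2$, where
\[
\Tr X^2=\sum_{i} x_{ii}^2+2\sum_{i<j}x_{ij}x_{ji}.
\]
So we first fix all off-diagonal entries $x_{ij}$, $i\ne j$. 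For each such choice the diagonal $(x_{11},\ldots,x_{nn})\in\cW^n$ satisfies the system
\[
x_{11}+\ldots+x_{nn}=-c_{n-1},\qquad x_{11}^2+\ldots+x_{nn}^2=c_{n-1}^2-2c_{n-2}-2\sum_{i<j}x_{ij}x_{ji}.
\]
This is exactly~\eqref{eq:GenEqnsSubgrAllOne} with $k=2$ and $s=n$.

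Corollary~\ref{cor:PowerSumVals} gives $W^{\lfloor (n-m)/3\rfloor}$ diagonal solutions. Here $m=1$ if $c_{n-1}\ne 0$; $m=2$ if $c_{n-1}=0$ but the right-hand side of the second equation is nonzero; and $m=0$ if both right-hand sides vanish. This replaces the weaker bound of~\cite{ManOstShp}. The off-diagonal entries are then counted as in~\cite{ManOstShp}. The remaining coefficients $c_{n-3},\ldots,c_0$ are used, through the same expansion of minors and the linear-equation bounds of Lemma~\ref{lem:LinEq homog} and~\cite[Corollary~16]{BGKS}, to save further factors. The degenerate case, where $\sum_{i<j}x_{ij}x_{ji}$ takes the special value $(c_{n-1}^2-2c_{n-2})/2$, is a homogeneous-type equation in the products $x_{ij}x_{ji}$. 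That equation has only $W^{\lfloor n(n-1)/4\rfloor}$-type many solutions after pairing, so we split the count into this case and its complement, exactly as in the two terms of the maximum in $\alpha(n)$.

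Next we rebalance. In $\alpha(n)$ the two branches of the maximum come from exponents $\lfloor (n-1)/2\rfloor$ and $\lfloor n/2\rfloor$ for the diagonal count, that is, from the $k=1$ type bound. We substitute $\lfloor (n-m)/3\rfloor$ with the corresponding $m$ in each branch, and then compare with the old exponents. For $\beta(n)=(3n^2-n)/4$ we track the fractional parts $n(n-1)/4$ modulo $1$, which depend on $n \bmod 4$; this produces $\lambda_0(n)$ and $\nu_0(n)$. For Theorem~\ref{thm:givencharpoly} one checks that the gain of one power of $W$ survives in the maximising branch exactly when $n\equiv 1\pmod 4$. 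In the other residue classes the improvement in one branch is absorbed by the other branch, so $\alpha_0(n)=\alpha(n)$.

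The main obstacle is this bookkeeping. One must verify, residue class by residue class, that the new diagonal exponent combines with the floor $\lfloor n(n-1)/4\rfloor$, or with $n(n-1)/4-1/4$, so that the maximum drops by exactly the claimed amount. The small exceptional case $n=4$ in $\nu_0$ must be handled separately, since there $\lfloor (n-1)/3\rfloor$ and the competing term coincide. We would first tabulate both branches for $n \bmod 12$, to account for the combined floors modulo $3$ and modulo $4$. Then we check that the minimum saving in each residue class modulo $4$ is as stated, treating $n=4$ by hand.
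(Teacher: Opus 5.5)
Your top-level idea matches the paper, which justifies this statement only by saying that Corollary~\ref{cor:PowerSumVals} with $k=2$ is inserted into the argument of~\cite{ManOstShp}. The counting scheme you build around it does not work. If you fix all $n(n-1)$ off-diagonal entries first, you already pay $W^{n(n-1)}$. Since $n(n-1)-\beta(n)=n(n-3)/4\ge 0$, no saving below $W^{\beta(n)}$ can come out of that. The shape $\beta(n)=N+n/2+N/2$, with $N=n(n-1)/2$, indicates what has to be done. Fix only the $N$ entries $x_{ij}$ with $i<j$. Then $\Tr X^2=c_{n-1}^2-2c_{n-2}$ reads $\sum_i x_{ii}^2+2\sum_{i<j}x_{ij}x_{ji}=c_{n-1}^2-2c_{n-2}$. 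This is linear in the $N$ unfixed entries $x_{ji}$, with nonzero coefficients $2x_{ij}$. The exponents are fully explained by $\Tr X$ and $\Tr X^2$ alone, so no minors or coefficients $c_{n-3},\dots,c_0$ are needed. The relevant dichotomy is whether $\sum_{i<j}x_{ij}x_{ji}=0$. Your split at the value $(c_{n-1}^2-2c_{n-2})/2$ is not the right one, and that equation is not homogeneous unless $c_{n-1}^2=2c_{n-2}$.

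Suppose first that $\sum_{i<j}x_{ij}x_{ji}\ne0$. Then the diagonal is constrained only by $\sum_i x_{ii}=-c_{n-1}$, because the quadratic equation involves the free lower entries. So you only have the $k=1$ bound $W^{\lfloor (n-\delta)/2\rfloor}$, where $\delta=1$ if $c_{n-1}\ne0$ and $\delta=0$ otherwise. For each such diagonal, the $x_{ji}$ satisfy a linear equation with nonzero right-hand side. This gives $W^{\lfloor (N-1)/2\rfloor}$ by Theorem~\ref{thm:SystSubgrGen} with $k=1$, and uniformity in the right-hand side is essential here. Only when $\sum_{i<j}x_{ij}x_{ji}=0$ are both right-hand sides of the diagonal system fixed, namely $-c_{n-1}$ and $c_{n-1}^2-2c_{n-2}$. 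In that case the $x_{ji}$ contribute $W^{\lfloor N/2\rfloor}$ by Lemma~\ref{lem:LinEq homog}, and Corollary~\ref{cor:PowerSumVals} gives $W^{\lfloor (n-m)/3\rfloor}$ for the diagonal. The exponent is therefore $N+\max\{\lfloor (n-m)/3\rfloor+\lfloor N/2\rfloor,\ \lfloor (n-\delta)/2\rfloor+\lfloor (N-1)/2\rfloor\}$, with $m=0$ the worst case when $c_{n-1}=0$.

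So the two branches of $\alpha(n)$ are not the cases $c_{n-1}\ne0$ and $c_{n-1}=0$. The $k=2$ bound may be substituted only in the branch paired with $\lfloor N/2\rfloor$. Substituting it ``in each branch'', as you propose, is unjustified and overshoots the statement: for $n=7$ and $c_{n-1}\ne0$ it would give a saving of $2$, whereas $\nu_0(7)=1$. With the correct pairing, the check over $n \bmod 4$ that you postponed produces exactly $\lambda_0(n)$ and $\nu_0(n)$. The exception $n=4$ is simply the case where the first term strictly dominates, since $1+3>1+2$.
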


Our exponents are all \underline{one} less than the corresponding exponents 
in~\cite[Theorem~B.1]{ManOstShp} in the following cases.
\[
\begin{array}{|c|c|}
\hline
\text{condition on $f$} & \text{condition on $n$}\\
\hline
c_{n-1}=c_{n-2}=0 & n=5 \\
c_{n-1}=0,\ c_{n-2}\ne 0 & n \equiv 1\pmod 4 \\
c_{n-1}\ne 0 & n \equiv 0,1\pmod 4,\ n \ne 4 \\
\hline
\end{array}
\]

Moreover, in the other remaining cases, our exponents are the same  
as in~\cite[Theorem~B.1]{ManOstShp}.

\section*{Acknowledgement}
During the preparation of this work, the authors were partially supported by the Australian Research Council Grants DP230100530 and  FT250100208. I.S. also would like to thank the Max Planck Institute for Mathematics, Bonn, for its support and excellent working conditions during the preparation of this work.


\begin{thebibliography}{9}
\bibitem{AV09} F. Amoroso and E. Viada, 
`Small points on subvarieties of a torus', 
{\it Duke Math. J.\/} {\bf 150} (2009), 407--442.

\bibitem{AV11} F. Amoroso and E. Viada, 
`On the zeros of linear recurrence sequences', 
{\it Acta Arith.\/} {\bf 147} (2011), 387--396.


\bibitem{BBG}
J. Bajpai, S. Bhakta and V.~C. Garc\'ia, `Exponential sums in prime fields for modular forms', 
{\it Res. Number Theory\/}, {\bf 8} (2022),  Art.18. 


\bibitem{BlLi} V. Blomer and  J. Li, `Correlations of values of random diagonal forms', 
{\it Intern. Math. Res.  Notices\/}, {\bf 2023} (2023),  20296--20336. 


\bibitem{BGKS}
J. Bourgain, M. Z. Garaev, S. V. Konyagin, and I. E. Shparlinski,
`Multiplicative congruences with variables from short intervals',
{\it J. Anal. Math.\/}, {\bf 124} (2014), 117--147.


\bibitem{Lang} S. Lang, 
{\it Fundamentals of Diophantine geometry\/}, 
Springer-Verlag, New York, 1983.

 \bibitem{ManOstShp} A. Manning, A. Ostafe  and I. E. Shparlinski,
 `Counting matrices over finite rank multiplicative groups', {\it  Pacific J. Math.\/}, {\bf 340}  (2026), 115--138. 

\bibitem{MOS} A. Mohammadi, A. Ostafe and I. E. Shparlinski,  `On some matrix counting problems',
{\it J. Lond. Math. Soc.\/}, {\bf 110} 2024, Art.~e70044.

\bibitem{Mord}
L. J. Mordell,  `On a sum analogous to a Gauss sum', 
{\it  Quart. J. Math.\/}, {\bf 3} (1932),  161--167.

\bibitem{Mud} A. Mudgal,  `Diameter-free estimates for the quadratic Vinogradov mean value theorem',
{\it Proc. Lond. Math. Soc.\/}, {\bf 126} (2023),   76--128. 

\bibitem{SalWool} P. Salberger and T. D. Wooley, `Rational points on complete intersections of higher degree, and mean values of Weyl sums',
{\it J. Lond. Math. Soc.\/}, {\bf 82} (2010), 317--342.


\bibitem{Wool1}  T. D. Wooley, `Paucity problems and some relatives of Vinogradov’s
mean value theorem',  {\it Math. Proc. Camb. Phil. Soc.\/}, {\bf 175} (2023), 327--343. 

\bibitem{Wool2}  T. D. Wooley, `Condensation and densification for sets of large diameter', 
{\it Combinatorial and Additive Number Theory VI\/}, 
Springer Proc. Math. Stat., v.464, Springer, Cham, 2025, 461--502.

\bibitem{Wool3}  T. D. Wooley, `Strong paucity in the Br{\"u}dern--Robert Diophantine system',  
{\it Quart. J. Math.\/}, (to appear). 

\end{thebibliography}
\end{document}